\documentclass[12pt, reqno]{amsart}
\usepackage{amsmath, amsthm, amscd, amsfonts, amssymb, graphicx, color, mathrsfs}
\usepackage[bookmarksnumbered, colorlinks, plainpages]{hyperref}
\usepackage{cite}
\usepackage[all]{xy}
\usepackage{slashed}
\usepackage{tikz-cd}
\usepackage{mathabx}
\usepackage{tipa}
\usepackage{soul}
\usepackage{cancel}
\usepackage{ulem}
\usepackage{algorithm}
\usepackage{algpseudocode}
\usepackage{enumitem}
\usepackage{multirow}
\usepackage{float}
\usepackage{verbatim}
\usepackage{amssymb}
\usepackage{extpfeil}
\usepackage{caption}
\usepackage{array}
\usepackage{cellspace}

\usepackage{pstricks, pst-plot}
\usepackage{pictex,dcpic}
\usepackage{tikz}
\usepackage{tikz-cd}

\allowdisplaybreaks

\newlength\tindent
\makeatletter
\newcommand{\FBSMA}{%
  \begingroup
  \let\cite@adjust\@empty
  \cite[p.~1826]{FBSM}%
  \endgroup}
\makeatother

\makeatletter
\newcommand{\GGNTheoremA}{%
  \begingroup
  \let\cite@adjust\@empty
  \cite[Proposition~3.3]{GGN}%
  \endgroup}
\makeatother
\makeatletter
\newcommand{\FBSMTheoremOne}{%
  \begingroup
  \let\cite@adjust\@empty
  \cite[Proposition~3]{FBSM}%
  \endgroup}
\makeatother
\newtheorem{theorem}{Theorem}[section]

\newtheorem{proposition}[theorem]{Proposition}
\newtheorem{corollary}[theorem]{Corollary}

\theoremstyle{definition}

\newtheorem{example}[theorem]{Example}

\theoremstyle{remark}
\newtheorem{remark}[theorem]{Remark}
\numberwithin{equation}{section}

\begin{document}
\setcounter{page}{1}

\title[]{Invariant almost Hermitian structures on the maximal real flag manifold of type $\mathcal{A}_3$}

\author[A. P. C. Freitas]{Ana P. C. Freitas}
\address{Ana P. C. Freitas \endgraf 
Universidade Federal da Grande Dourados, Faculdade de Ciências Exatas e Tecnologia, Rodovia Dourados–Itahum, km 12, Unidade II, Cidade Universitária, CEP 79804-970, Dourados - MS, Brazil. \endgraf
{\it E-mail address:} {\rm anafreitas@ufgd.edu.br}
 }

\author[B. Grajales]{Brian Grajales}%\textsuperscript{*}}
%\thanks{\textsuperscript{*}Corresponding author: {\rm bdgtriana@uem.br}}
\address{Brian Grajales \endgraf
Universidade Estadual de Maringá, Departamento de Matem\'{a}tica, Avenida Colombo, 5790, Campus Universitário, CEP 87020-900, Maringá - PR, Brazil. 
\endgraf
  {\it E-mail address:} {\rm bdgtriana@uem.br}
  }

\author[A. R. Oliveira]{Ailton R. Oliveira}
\address{Ailton R. Oliveira \endgraf
Universidade Estadual de Mato Grosso do Sul, Rodovia Itahum, km 12 s/n, Cidade Universitária de Dourados, CEP 79804-970, Dourados - MS, Brazil.\endgraf
{\it E-mail address:} {\rm aroliveira@uems.br}
  }

\keywords{Invariant almost Hermitian structures, real flag manifolds, Gray-Hervella classes}
\subjclass[2020]{53C15, 53C30, 53C55.}
\thanks{}
\begin{abstract}
We study the Gray-Hervella classes of invariant almost Hermitian structures on the maximal real flag manifold of type $\mathcal{A}_3$. We first obtain an explicit parametrization of all invariant almost Hermitian structures and establish necessary and sufficient conditions for an invariant almost Hermitian structure to belong to each Gray-Hervella class. For each nonempty class, we give an explicit parametrization of the corresponding set of structures, describe it as a smooth manifold and determine its dimension. 
\end{abstract}
\maketitle

%\tableofcontents
\allowdisplaybreaks
\section{Introduction}
Almost Hermitian structures provide a natural generalization of K\"ahler geometry in which the almost complex structure is not necessarily parallel with respect to the Levi-Civita connection. The obstruction to this parallelism is measured by the intrinsic torsion of the almost Hermitian structure, which carries the same information as the covariant derivative of the fundamental form. In particular, an almost Hermitian structure is K\"ahler precisely when its intrinsic torsion vanishes. Gray and Hervella showed in \cite{GH} that the space of possible intrinsic torsions decomposes into four irreducible $\mathrm{U}(n)$-modules $\mathcal{W}_j,\ j=1,2,3,4$ and that these modules yield sixteen classes, which describe the different possible deviations from K\"ahler geometry and provide a unified framework for studying distinguished geometries such as nearly K\"ahler, almost K\"ahler and Hermitian structures.\\

On a homogeneous space, every invariant tensor is determined by its value at the origin. Thus, the Gray-Hervella class of an invariant almost Hermitian structure can be determined from the isotropy representation and the Lie bracket. Over the past decades, several authors have studied the Gray-Hervella classes of invariant almost Hermitian structures on different families of homogeneous spaces. We mention some contributions related to the questions considered here. In real dimension six, Abbena, Garbiero and Salamon obtained a combinatorial description of the Gray-Hervella classes of left-invariant almost Hermitian structures on the Iwasawa manifold and other nilmanifolds \cite{AGS}. Alekseevsky, Kruglikov and Winther classified six-dimensional homogeneous almost complex structures with semisimple isotropy and determined which combinations of the four components of the intrinsic torsion can occur for compatible invariant almost Hermitian structures \cite{AKW}. For almost abelian Lie groups, Fino and Paradiso classified the six-dimensional Lie algebras admitting invariant balanced Hermitian structures \cite{FP}. Andrada and Tolcachier later gave algebraic conditions characterizing all Gray-Hervella classes of invariant almost Hermitian structures on almost abelian Lie groups and studied harmonic almost complex structures within these classes \cite{AT}. In the case of complex flag manifolds, San Martin and Negreiros characterized invariant $(1,2)$-symplectic structures and classified these structures on full flag manifolds \cite{SMN}. San Martin and Silva later determined which generalized flag manifolds admit invariant strict nearly K\"ahler structures \cite{SMS}, Alves and da Silva classified invariant $\mathcal{G}_1$-structures and invariant quasi-K\"ahler structures on these spaces \cite{AS}, and Grama and Oliveira studied invariant almost Hermitian structures on flag manifolds to provide examples of Kähler-like scalar curvature \cite{GO}. \\

For real flag manifolds, one may have isotropy representations with equivalent irreducible submodules. Consequently, invariant metrics need not be diagonal with respect to an irreducible decomposition. In this setting, the compatibility condition between an invariant metric and an invariant almost complex structure imposes additional relations among their parameters. This leads to a different treatment from the one used for complex flag manifolds. In this paper, we consider the maximal real flag manifold of type $\mathcal{A}_3,$
\[
\mathbb{F}=\operatorname{SO}(4)/\mathrm{S}\left(\mathrm{O}(1)\times\mathrm{O}(1)\times\mathrm{O}(1)\times\mathrm{O}(1)\right).
\]
This is a six-dimensional homogeneous space whose isotropy representation contains equivalent irreducible submodules. We give an explicit parametrization of all invariant almost Hermitian structures on $\mathbb{F}$ by 
\begin{equation}\label{eq:A3-W123}
(z_j,\sigma_j)_{j=1}^{3},\ z_j\in\mathbb{C},\ \sigma_j\in\mathbb{R}^*,\ \sigma_j\operatorname{Im}(z_j)>0.
\end{equation}
The parameter space is then a smooth manifold of real dimension $9$ with eight connected components, determined by the signs of $\sigma_1,\sigma_2$ and $\sigma_3.$ Using these parameters, we obtain necessary and sufficient conditions for an invariant almost Hermitian structure to belong to each Gray-Hervella class, as well as explicit parametrizations of the corresponding sets of structures. In Proposition~\ref{prop:A3-W4}, we prove that the $\mathcal{W}_4$-component of the intrinsic torsion vanishes for every invariant almost Hermitian structure. Hence, it is enough to consider the classes
\[
\mathcal{W}_I=\bigoplus_{j\in I}\mathcal{W}_j
\]
with $I\subseteq\{1,2,3\},$ since $\mathcal{W}_I$ and $\mathcal{W}_{I\cup\{4\}}$ determine the same set of invariant structures on $\mathbb{F}.$ The sets of structures corresponding to the nonempty classes are smooth submanifolds of the parameter space. Table~\ref{tab:A3-GH-classification} gives the necessary and sufficient conditions for each class and the real dimension, up to homothety, of the corresponding submanifold.\\

The paper is organized as follows. In Section~2, we recall the basic notions of almost Hermitian geometry used throughout the paper and describe the invariant Riemannian metrics and almost complex structures on the maximal real flag manifold of type $\mathcal{A}_3$. In Section~3, we parametrize the invariant almost Hermitian structures and determine their Gray-Hervella classes.

\begin{table}[h]
\centering
\renewcommand{\arraystretch}{1.5}
\begin{tabular}{|c|c|c|}
\hline
\text{Class}
&$\begin{array}{c}
\text{Necessary and}\\
\text{sufficient conditions}
\end{array}$&$\begin{array}{c}
\text{Dimension up to}\\
\text{homothety}
\end{array}$\\[1mm]\hline
$\mathcal{W}_{\emptyset},\mathcal{W}_{2},\mathcal{W}_{3}, \mathcal{W}_{4}$&\text{No invariant structures}&-\\[1mm]\hline
$\mathcal{W}_{1}$&\text{Table~\ref{tab:A3-W1}}&0\\[1mm]\hline
$\mathcal{W}_{\{1,2\}}$&\eqref{eq:A3-W12-triangle}\text{ and }\eqref{eq:A3-W12-z}&2\\[1mm]\hline
$\mathcal{W}_{\{1,3\}}$&\eqref{eq:A3-W13-condition}&4\\[1mm]\hline
$\mathcal{W}_{\{2,3\}}$&\eqref{eq:A3-W23-condition}&6\\[1mm]\hline
$\mathcal{W}_{\{1,2,3\}}$&\eqref{eq:A3-W123}&8\\\hline
\end{tabular}
\caption{Gray-Hervella classes of invariant almost Hermitian structures on $\mathbb{F}$.}
\label{tab:A3-GH-classification}
\end{table}

\section{Preliminaries}
\subsection{Invariant almost Hermitian structures}\label{subsec:almost-Hermitian}
In this subsection, we recall the standard terminology of almost Hermitian geometry and set all sign conventions. We refer to \cite{GH}, \cite[Section~8]{CS} and \cite{N} for the definitions and results reviewed below.\\

Let $G$ be a compact Lie group, let $H\subseteq G$ be a closed subgroup and suppose that the homogeneous space $M:=G/H$ has real dimension $2m.$ Let $o=eH$ be the origin of $M,$ and denote by $\mathfrak{g}$ and $\mathfrak{h}$ the Lie algebras of $G$ and $H,$ respectively. Since $G$ is compact, we may fix an $\operatorname{Ad}(H)$-invariant reductive decomposition
\[
    \mathfrak{g}=\mathfrak{h}\oplus\mathfrak{m}.
\]
If $\pi\colon G\longrightarrow G/H$ is the canonical projection, the restriction of $(d\pi)_e$ to $\mathfrak{m}$ identifies $\mathfrak{m}$ with $T_oM.$ Under this identification, $G$-invariant tensor fields on $M$ correspond to $\operatorname{Ad}(H)$-invariant tensors on $\mathfrak{m}.$ We use the same notation for a $G$-invariant tensor field and its value at $o.$ A $G$-invariant almost complex structure on $M$ is determined by an $\operatorname{Ad}(H)$-equivariant endomorphism $J\colon\mathfrak{m}\longrightarrow\mathfrak{m}$ satisfying $J^2=-\operatorname{Id}_{\mathfrak{m}}.$ Similarly, a $G$-invariant Riemannian metric on $M$ is determined by an $\operatorname{Ad}(H)$-invariant inner product $g$ on $\mathfrak{m}.$ The pair $(g,J)$ is called a $G$-invariant almost Hermitian structure if $g$ and $J$ are compatible, that is,
\begin{equation}\label{eq:compatible-metric}
    g(JX,JY)=g(X,Y),\ X,Y\in\mathfrak{m}.
\end{equation}
Its fundamental form is the $G$-invariant two-form determined at $o$ by
\begin{equation*}
    \omega(X,Y):=g(JX,Y),\ X,Y\in\mathfrak{m}.
\end{equation*}
If $\mathfrak{g}$ is endowed with an $\operatorname{Ad}(G)$-invariant inner product $(\cdot,\cdot),$ then, for every $\operatorname{Ad}(H)$-invariant inner product $g$ on $\mathfrak{m},$ there exists a unique positive-definite, $(\cdot,\cdot)$-self-adjoint and $\operatorname{Ad}(H)$-equivariant linear operator $A\colon\mathfrak{m}\longrightarrow\mathfrak{m}$ such that
\begin{equation}\label{eq:metric-operator}
g(X,Y)=(AX,Y),\ X,Y\in\mathfrak{m}.
\end{equation}
We call $A$ the {\it metric operator} associated with $g.$ Conversely, every positive-definite, $(\cdot,\cdot)$-self-adjoint and $\operatorname{Ad}(H)$-equivariant linear operator on $\mathfrak{m}$ determines an $\operatorname{Ad}(H)$-invariant inner product through \eqref{eq:metric-operator}. Consequently, the $G$-invariant Riemannian metrics on $M$ are in one-to-one correspondence with the linear operators on $\mathfrak{m}$ having these properties. In terms of the metric operator, the compatibility condition \eqref{eq:compatible-metric} is equivalent to
\begin{equation}\label{eq:compatible-metric-operator}
J^{T}AJ=A,
\end{equation}
where $J^{T}$ denotes the adjoint of $J$ with respect to $(\cdot,\cdot).$ The Nijenhuis tensor of $J$ is defined by
\begin{equation*}
    N(X,Y):=-[JX,JY]+J[JX,Y]+J[X,JY]+[X,Y]
\end{equation*}
for $X,Y\in\mathfrak X(M).$ Since $J$ is $G$-invariant, $N$ is also $G$-invariant and its value at $o$ is given by
\begin{equation}\label{eq:homogeneous-Nijenhuis}
    N(X,Y)=-[JX,JY]_{\mathfrak{m}}+J[JX,Y]_{\mathfrak{m}}+J[X,JY]_{\mathfrak{m}}+[X,Y]_{\mathfrak{m}}
\end{equation}
for all $X,Y\in\mathfrak{m}.$ Here $[\cdot,\cdot]_{\mathfrak{m}}$ denotes the $\mathfrak{m}$-component of the Lie bracket of $\mathfrak{g}.$ The almost complex structure $J$ is integrable if $N=0.$ By the Newlander-Nirenberg theorem \cite{NN}, this is equivalent to the existence of a complex atlas on $M$ whose induced almost complex structure is $J.$ In the invariant setting, integrability is therefore equivalent to the vanishing of the right-hand side of \eqref{eq:homogeneous-Nijenhuis} for all $X,Y\in\mathfrak{m}.$ The $G$-invariant almost Hermitian structure $(g,J)$ is called Hermitian if $J$ is integrable, almost K\"ahler if $d\omega=0$ and K\"ahler if $\nabla J=0,$ where $\nabla$ denotes the Levi-Civita connection of $g.$ For any almost Hermitian structure,
\[
    \nabla J=0
    \Longleftrightarrow
    N=0\ \text{and}\ d\omega=0.
\]
Consequently, $(g,J)$ is K\"ahler if and only if it is both Hermitian and almost K\"ahler; see \cite[Theorem~3.1]{GH}. The covariant derivative of $\omega$ is the tensor field defined by
\begin{equation*}
    (\nabla\omega)(X,Y,Z):=(\nabla_X\omega)(Y,Z)=X\bigl(\omega(Y,Z)\bigr)-\omega(\nabla_XY,Z)-\omega(Y,\nabla_XZ)
\end{equation*}
for $X,Y,Z\in\mathfrak X(M).$ The Levi-Civita connection and $\omega$ are $G$-invariant and hence so is $\nabla\omega.$ We may therefore identify $\nabla\omega$ with its value at $o,$ so that
\[
    \nabla\omega\in
    \bigl(\mathfrak m^*\otimes\Lambda^2\mathfrak m^*\bigr)^H.
\]Let $\Lambda^g:\mathfrak{m}\times\mathfrak{m}\longrightarrow\mathfrak{m}$ be the Nomizu map associated with the Levi-Civita connection of $g.$ It is characterized by
\begin{equation*}
    2g(\Lambda^g_XY,Z)=g([X,Y]_{\mathfrak{m}},Z)-g([X,Z]_{\mathfrak{m}},Y)-g(X,[Y,Z]_{\mathfrak{m}})
\end{equation*}
for all $X,Y,Z\in\mathfrak{m}.$ In terms of the Nomizu map, the tensor $\nabla\omega$ is given by
\begin{equation*}
    (\nabla\omega)(X,Y,Z)=-\omega(\Lambda^g_XY,Z)-\omega(Y,\Lambda^g_XZ).
\end{equation*}
In particular, $\nabla\omega$ is completely determined by the Lie bracket of $\mathfrak{g},$ the reductive decomposition and the pair $(g,J).$ Moreover,
\begin{equation}\label{eq:nablaomega-symmetries}
    \begin{aligned}
        (\nabla\omega)(X,Y,Z)=-(\nabla\omega)(X,Z,Y)\ \textnormal{and}\ (\nabla\omega)(X,JY,JZ)=-(\nabla\omega)(X,Y,Z)
    \end{aligned}
\end{equation}
for all $X,Y,Z\in\mathfrak{m}.$ Consequently,
\[
    (\nabla\omega)(X,JY,Z)=(\nabla\omega)(X,Y,JZ).
\]
The exterior derivative of \(\omega\) can be recovered from \(\nabla\omega\) by the formula
\begin{align}\label{eq:domega}
    \begin{split}
    d\omega(X,Y,Z)
    &=(\nabla\omega)(X,Y,Z)+(\nabla\omega)(Y,Z,X)
      +(\nabla\omega)(Z,X,Y)\\
    &=-\omega([X,Y]_{\mathfrak m},Z)-\omega([Y,Z]_{\mathfrak m},X)
      -\omega([Z,X]_{\mathfrak m},Y).
      \end{split}
\end{align}
The codifferential of $\omega$ is given by
\begin{equation*}
    (\delta\omega)(Z)=-\sum_{a=1}^{2m}(\nabla\omega)(e_a,e_a,Z),
\end{equation*}
where $\{e_1,\ldots,e_{2m}\}$ is any $g$-orthonormal basis of $\mathfrak{m}.$ We also regard $N$ as a covariant three-tensor by setting
\[
    N(X,Y,Z):=g(N(X,Y),Z).
\]
In terms of $\nabla\omega$, it is given by
\begin{equation*}
    \begin{split}
        N(X,Y,Z)=&-(\nabla\omega)(JX,Y,Z)
        +(\nabla\omega)(JY,X,Z)\\
        &-(\nabla\omega)(X,Y,JZ)
        +(\nabla\omega)(Y,X,JZ).
    \end{split}
\end{equation*}
Define $N^0:=N-\mathfrak{b}N,$ where
\[
(\mathfrak{b}N)(X,Y,Z):=\frac{1}{3}\left\{N(X,Y,Z)+N(Y,Z,X)+N(Z,X,Y)\right\}
\]
is the skew-symmetric part of $N.$\\

We now recall the Gray-Hervella decomposition \cite{GH}. Assume that $m\geq3$ and define
\[
    \mathcal K(\mathfrak{m}):=\left\{A\in\mathfrak{m}^*\otimes\Lambda^2\mathfrak{m}^*:A(X,JY,JZ)=-A(X,Y,Z)
    \right\}.
\]
By \eqref{eq:nablaomega-symmetries}, $\nabla\omega\in\mathcal K(\mathfrak{m})^H.$ The unitary group of the Hermitian vector space $(\mathfrak{m},g,J)$ is 
\[
\mathrm{U}(\mathfrak{m},g,J):=\{u\in\mathrm{GL}(\mathfrak{m}):g(uX,uY)=g(X,Y),\ uJ=Ju\}.
\]
The Gray-Hervella decomposition is the orthogonal direct sum
\begin{equation*}
    \mathcal{K}(\mathfrak{m})=\mathcal{W}_1\oplus\mathcal{W}_2\oplus\mathcal{W}_3\oplus\mathcal{W}_4,
\end{equation*}
where the summands are irreducible  and inequivalent real $\mathrm U(\mathfrak{m},g,J)$-modules. Each summand will be characterized below in terms of tensors derived from $d\omega$ and the covariant Nijenhuis tensor. For $I\subseteq\{1,2,3,4\}$, set
\[
    \mathcal{W}_I:=\bigoplus_{i\in I}\mathcal{W}_i,\ \mathcal W_\emptyset:=\{0\}.
\]
The $G$-invariant almost Hermitian structure $(g,J)$ belongs to the Gray-Hervella class $\mathcal{W}_I$ if $\nabla\omega\in\mathcal{W}_I.$ Equivalently, the $\mathcal{W}_j$-component of $\nabla\omega$ vanishes for every $j\notin I.$ The Gray-Hervella class is unchanged under homotheties of the metric and under a change of sign of $J,$ that is, for every $c>0,$
\[
(g,J)\in\mathcal{W}_I
\Longleftrightarrow(cg,J)\in\mathcal{W}_I
\Longleftrightarrow(g,-J)\in\mathcal{W}_I.
\]
Set $\mathfrak{m}_{\mathbb{C}}:=\mathfrak{m}\otimes_{\mathbb{R}}\mathbb{C}$ and extend $J$, $g$, $\omega$, $\nabla\omega$, $d\omega$ and $N$ by complex linearity. Then $J$ determines the decomposition
\[
    \mathfrak{m}_{\mathbb C}=\mathfrak{m}^{1,0}\oplus\mathfrak{m}^{0,1},
\]
where $\mathfrak{m}^{1,0}:=\left\{
        Z\in\mathfrak{m}_{\mathbb{C}}:JZ=iZ\right\}$ and $\mathfrak{m}^{0,1}:=\left\{
        Z\in\mathfrak{m}_{\mathbb{C}}:JZ=-iZ\right\}.$ For $X\in\mathfrak{m}$, the corresponding projections are
\[
    X^{1,0}=\frac{1}{2}(X-iJX),\ X^{0,1}=\frac12(X+iJX).
\]
In particular, $\mathfrak{m}^{0,1}=\overline{\mathfrak{m}^{1,0}}.$ If $J^*\colon\mathfrak{m}_{\mathbb{C}}^*\longrightarrow\mathfrak{m}_{\mathbb{C}}^*$ is the endomorphism defined by $(J^*\alpha)(X):=\alpha(JX),$ then
\[    \mathfrak{m}_{\mathbb{C}}^*=\Lambda^{1,0}\mathfrak{m}^*\oplus\Lambda^{0,1}\mathfrak{m}^*,
\]
where
\begin{align*}
    &\Lambda^{1,0}\mathfrak{m}^*:=\left\{\alpha\in\mathfrak{m}_{\mathbb{C}}^*:J^*\alpha=i\alpha\right\}\\ &\Lambda^{0,1}\mathfrak{m}^*:=\left\{\alpha\in\mathfrak{m}_{\mathbb C}^*:J^*\alpha=-i\alpha\right\}=\overline{\Lambda^{1,0}\mathfrak{m}^*}.
\end{align*}
For $p,q\geq0,$ define
\[
\Lambda^{p,q}\mathfrak{m}^*:=\Lambda^p\left(\Lambda^{1,0}\mathfrak{m}^*\right)\wedge\Lambda^q\left(\Lambda^{0,1}\mathfrak{m}^*\right).
\]
It follows that
\[
\Lambda^k\mathfrak{m}_{\mathbb{C}}^*=\bigoplus_{p+q=k}\Lambda^{p,q}\mathfrak{m}^*.
\]
In particular, since $d\omega\in\Lambda^3\mathfrak{m}_{\mathbb{C}}^*,$ we can write
\[
d\omega=(d\omega)^{3,0}+(d\omega)^{0,3}+(d\omega)^{2,1}+(d\omega)^{1,2},
\]
where $(d\omega)^{p,q}\in\Lambda^{p,q}\mathfrak{m}^*.$ We set $(d\omega)^-:=(d\omega)^{3,0}+(d\omega)^{0,3}$ and $(d\omega)^+:=(d\omega)^{2,1}+(d\omega)^{1,2},$ so that $d\omega=(d\omega)^-+(d\omega)^+.$ A $3$-form $\eta$ is called primitive if $\eta\wedge\omega^{m-2}=0.$ The component $(d\omega)^-$ is always primitive, while $(d\omega)^+$ admits the decomposition
\[
(d\omega)^+=(d\omega)^+_0+\frac{1}{m-1}\alpha_\omega\wedge\omega,
\]
where $\alpha_\omega:=-J^*\delta\omega$ is the Lee form of $\omega$ and $(d\omega)^+_0$ is primitive. Thus, the Lee form determines the non-primitive component of $d\omega.$ In particular, $d\omega$ is primitive if and only if $\alpha_\omega=0.$\\

The following characterization follows from
\cite[Proposition~1]{G} and the Gray-Hervella decomposition.
\begin{proposition}
    Let $(g,J)$ be a $G$-invariant almost Hermitian structure on $M=G/H,$ where $\dim_\mathbb{R}M=2m$ and $m\geq 3.$ Write 
    \[
    \nabla\omega=(\nabla\omega)_1+(\nabla\omega)_2+(\nabla\omega)_3+(\nabla\omega)_4,
    \]
    where $(\nabla\omega)_i\in\mathcal{W}_i,\ i=1,2,3,4.$ Then 
    \begin{itemize}
        \item $(\nabla\omega)_1=0$ if and only if $(d\omega)^-=0,$
        \item $(\nabla\omega)_2=0$ if and only if $N^0=0,$
        \item $(\nabla\omega)_3=0$ if and only if $(d\omega)^+_0=0,$
        \item $(\nabla\omega)_4=0$ if and only if $\alpha_\omega=0.$
    \end{itemize}
\end{proposition}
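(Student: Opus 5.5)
This proposition is a pointwise statement about a single element $\nabla\omega\in\mathcal K(\mathfrak m)$, so homogeneity plays no role beyond reducing everything to linear algebra on the Hermitian vector space $(\mathfrak m,g,J)$. The plan is to realise each of the four conditions as a $\mathrm U(\mathfrak m,g,J)$-equivariant linear map on $\mathcal K(\mathfrak m)$ and then use Schur's lemma. The Gray-Hervella summands are irreducible and pairwise inequivalent, so any equivariant linear map $\Phi$ on $\mathcal K(\mathfrak m)$ vanishes on some summands and is injective on the others. Consequently $\Phi(\nabla\omega)=0$ holds iff the components of $\nabla\omega$ in the non-annihilated summands vanish. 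So for each bullet it suffices to exhibit one equivariant map whose kernel is exactly $\bigoplus_{j\neq i}\mathcal W_j$.

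\textbf{The four maps.} On $\mathcal K(\mathfrak m)$ define:
\begin{itemize}
\item $\Phi_a(A)$, the skew-symmetrisation of $A$, i.e.\ the cyclic sum $A(X,Y,Z)+A(Y,Z,X)+A(Z,X,Y)$. By \eqref{eq:domega}, $\Phi_a(\nabla\omega)=d\omega$.
\item $\Phi_N(A)$, the expression for $N$ in terms of $\nabla\omega$ displayed above, applied to $A$. Then $\Phi_N(\nabla\omega)=N$.
\item $\Phi_\delta(A)(Z)=-\sum_a A(e_a,e_a,Z)$. Then $\Phi_\delta(\nabla\omega)=\delta\omega$.
\end{itemize}
These are manifestly $\mathrm U(\mathfrak m,g,J)$-equivariant, because they use only $g$, $J$ and contractions with a $g$-orthonormal basis. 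Composing with the $\mathrm U(\mathfrak m,g,J)$-equivariant projections $\Lambda^3\to\Lambda^{3,0}\oplus\Lambda^{0,3}$, $\Lambda^{2,1}\oplus\Lambda^{1,2}\to$ primitive part, $N\mapsto N^0=N-\mathfrak bN$, and $\delta\omega\mapsto\alpha_\omega=-J^*\delta\omega$ gives four equivariant maps. Their values at $\nabla\omega$ are $(d\omega)^-$, $(d\omega)^+_0$, $N^0$ and $\alpha_\omega$ respectively.

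\textbf{Identifying the kernels.} I take from \cite{GH} (equivalently \cite[Proposition~1]{G}) the explicit defining conditions of the summands:
\begin{itemize}
\item $\mathcal W_1$ is the space of skew tensors in $\mathcal K$ (nearly K\"ahler type);
\item $\mathcal W_2$ is the kernel of the cyclic sum (almost K\"ahler type);
\item $\mathcal W_3\oplus\mathcal W_4$ consists of tensors with $A(X,Y,Z)-A(JX,JY,Z)=0$ (Hermitian type);
\item $\mathcal W_4$ is the image of the Lee-form embedding $\theta\mapsto\theta\wedge\omega$-type tensors, while $\mathcal W_3$ is cut out by the trace condition $\Phi_\delta=0$.
\end{itemize}
With these in hand, each kernel is checked as follows.
\begin{itemize}
\item $\ker\Phi_N=\mathcal W_3\oplus\mathcal W_4$, since $\Phi_N$ measures exactly the failure of the Hermitian-type identity. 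Moreover $\mathfrak b\Phi_N$ is nonzero only on $\mathcal W_1$: skew $N$ corresponds to $(3,0)+(0,3)$ torsion. Hence $\ker(N^0)=\mathcal W_1\oplus\mathcal W_3\oplus\mathcal W_4$.
\item $(\Phi_a)^-$ kills $\mathcal W_2,\mathcal W_3,\mathcal W_4$ but not $\mathcal W_1$. On $\mathcal W_1$ the cyclic sum is $3A$, and $A$ has type $(3,0)+(0,3)$ because skew tensors in $\mathcal K$ satisfy $A(JX,JY,Z)=-A(X,Y,Z)$ in every pair of slots.
\item The primitive $(2,1)+(1,2)$ part of $\Phi_a$ is nonzero exactly on $\mathcal W_3$. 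It vanishes on $\mathcal W_2$ since $\mathcal W_2=\ker\Phi_a$, and on $\mathcal W_4$ since there $d\omega$ is a multiple of $\alpha_\omega\wedge\omega$, by the decomposition recalled above.
\item $\Phi_\delta$ kills $\mathcal W_1,\mathcal W_2,\mathcal W_3$, being the only trace, and is injective on $\mathcal W_4\cong\mathfrak m^*$.
\end{itemize}
Each nonvanishing claim reduces, by Schur's lemma, to producing one explicit nonzero element of the relevant summand and checking that the map does not kill it. For example, $A=\theta\wedge\omega$-type tensors for $\mathcal W_4$, and a $(3,0)$-form for $\mathcal W_1$.

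\textbf{Main obstacle.} I expect the main difficulty to be the bookkeeping in the $\mathcal W_2$/$\mathcal W_3$ statements. For $\mathcal W_2$ one must show that $N^0$ vanishes on $\mathcal W_1$, which relies on $N$ being totally skew on $\mathcal W_1$, and that $N^0$ is nonzero on $\mathcal W_2$. For $\mathcal W_3$ one must show that the primitive part of $(d\omega)^+$ misses $\mathcal W_4$. The hypothesis $m\ge3$ is used here: it makes $\mathcal W_3$ nonzero and ensures that $\wedge\omega$ is injective on $1$-forms, so the summands are genuinely inequivalent.
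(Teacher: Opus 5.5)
Your argument is correct in substance, but it does not follow the paper's route. The paper gives no argument of its own: it attributes the statement to \cite[Proposition~1]{G} together with the Gray--Hervella decomposition, i.e.\ to Gauduchon's explicit description of $\nabla\omega$ in terms of $d\omega$ and $N$, from which the four components are read off.

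You instead start from Gray--Hervella's defining conditions for the summands. You then use that $\mathcal K(\mathfrak m)$ is multiplicity-free, so the kernel of every $\mathrm U(\mathfrak m,g,J)$-equivariant map is a sum of some of the $\mathcal W_j$. This makes most vanishing statements automatic by Schur's lemma. For instance, $(d\omega)^-$ and $\mathfrak bN$ take values in the real $(3,0)+(0,3)$-forms, a copy of $\mathcal W_1$, and $\delta\omega$ takes values in $\mathfrak m^*\cong\mathcal W_4$. Only a few hand checks remain, such as $\Phi_N(A)(X,Y,Z)=-4A(JX,Y,Z)$ being totally skew for $A\in\mathcal W_1$. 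What your route buys is a short, conceptual proof. What it gives up is that you only obtain ``component vanishes iff invariant vanishes,'' whereas the explicit formula behind \cite[Proposition~1]{G} expresses each $(\nabla\omega)_j$ linearly through $(d\omega)^-$, $N^0$, $(d\omega)^+_0$ and $\alpha_\omega$.

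A few steps should be tightened.
\begin{itemize}
\item Injectivity on the surviving summands alone does not give ``$\Phi(\nabla\omega)=0$ iff those components vanish.'' You need the pairwise inequivalence, equivalently that $\ker\Phi$ is a submodule of a multiplicity-free module.
\item The claim $\ker\Phi_N=\mathcal W_3\oplus\mathcal W_4$ deserves its one-line proof. Write $\Phi_N(A)(X,Y,Z)=C(Y,X,Z)-C(X,Y,Z)$ with $C(X,Y,Z):=A(JX,Y,Z)+A(X,JY,Z)$, which is skew in its last two slots. Then $\Phi_N(A)=0$ forces $C=0$, and that is exactly the Hermitian-type identity.
\item The vanishing of $(d\omega)^+_0$ on $\mathcal W_4$ does not follow from the decomposition $(d\omega)^+=(d\omega)^+_0+\frac{1}{m-1}\alpha_\omega\wedge\omega$ alone. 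Either use Schur's lemma, since the real primitive $(2,1)+(1,2)$-forms form an irreducible module isomorphic to $\mathcal W_3$, or compute the cyclic sum of Gray--Hervella's explicit $\mathcal W_4$-tensor, which is a multiple of $(J^*\theta)\wedge\omega$.
\item The hypothesis $m\geq 3$ matters only in that all four summands are then nonzero. Their inequivalence is part of the Gray--Hervella theorem and has nothing to do with $\wedge\omega$ being injective on $1$-forms.
\end{itemize}
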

It follows immediately that $(g,J)$ belongs to the Gray-Hervella class $\mathcal{W}_I,\ (I\subseteq\{1,2,3,4\})$ if and only if
\begin{itemize}
    \item $(d\omega)^-=0$ whenever $1\notin I,$
    \item $N^0=0$ whenever $2\notin I,$
    \item $(d\omega)^+_0=0$ whenever $3\notin I,$
    \item $\alpha_\omega=0$ whenever $4\notin I.$
\end{itemize}
We recall the usual terminology for some of the Gray-Hervella classes. The almost Hermitian structure $(g,J)$ is called
\begin{itemize}
    \item {\it K\"ahler} if it belongs to $\mathcal{W}_{\emptyset};$
    \item {\it nearly K\"ahler} if it belongs to $\mathcal{W}_1;$
    \item {\it almost K\"ahler} if it belongs to $\mathcal{W}_2;$
    \item {\it Hermitian semi-K\"ahler}, also called {\it balanced Hermitian}, if it belongs to $\mathcal{W}_3;$
    \item {\it locally conformally K\"ahler} if it belongs to $\mathcal{W}_4;$
    \item {\it quasi-K\"ahler} if it belongs to $\mathcal{W}_1\oplus\mathcal{W}_2$;
    \item {\it Hermitian} if it belongs to $\mathcal{W}_3\oplus\mathcal{W}_4;$
    \item {\it semi-K\"ahler} or {\it co-symplectic} if it belongs to $\mathcal{W}_1\oplus\mathcal{W}_2\oplus\mathcal{W}_3.$
\end{itemize}
We conclude this section with the following observation, which will be useful later.

\begin{remark}\label{rem:type-reductions}
Since $d\omega$ is a real $3$-form, its components $(d\omega)^{3,0}$ and $(d\omega)^{0,3}$ are complex conjugate. Hence,
\[
(d\omega)^-=0
\ \textnormal{if and only if}\
\left.d\omega\right|_{\mathfrak{m}^{1,0}\times\mathfrak{m}^{1,0}\times\mathfrak{m}^{1,0}}=0.
\]
Similarly, since $(d\omega)^+_0$ is a real $3$-form of type $(2,1)+(1,2),$
\[
(d\omega)^+_0=0
\ \textnormal{if and only if}\
\left.(d\omega)^+_0\right|_{\mathfrak{m}^{0,1}\times\mathfrak{m}^{1,0}\times\mathfrak{m}^{1,0}}=0.
\]
On the other hand, the identities
\[
N(JX,Y,Z)=N(X,JY,Z)=N(X,Y,JZ)
\]
imply that $N$ vanishes whenever its arguments are not all of the same type. The same conclusion holds for $\mathfrak{b}N$ and hence for $N^0.$ Since these are real tensors, their restrictions to $\mathfrak{m}^{1,0}\times \mathfrak{m}^{1,0}\times \mathfrak{m}^{1,0}$ and $\mathfrak{m}^{0,1}\times\mathfrak{m}^{0,1}\times \mathfrak{m}^{0,1}$ are complex conjugate. Therefore, 
\[
N^0=0
\ \textnormal{if and only if}\ 
\left.N^0\right|_{\mathfrak{m}^{1,0}\times
\mathfrak{m}^{1,0}\times\mathfrak{m}^{1,0}}=0.
\]
\end{remark}

\subsection{The maximal real flag manifold of type $\mathcal{A}_3$}
\label{sec:A3-real-flag}

In this subsection, we describe the invariant Riemannian metrics and the invariant almost complex structures of the maximal real flag manifold of type $\mathcal{A}_3.$\\

Let $\mathrm{SL}(4,\mathbb{R})$ be the special linear group of degree $4$ over $\mathbb{R}$ and let $\mathfrak{sl}(4,\mathbb{R})$ be its Lie algebra. Consider the Cartan involution $\tau:\mathfrak{sl}(4,\mathbb{R})\to\mathfrak{sl}(4,\mathbb{R})$ defined by $\tau(X):=-X^{T}.$ The corresponding Cartan decomposition is $\mathfrak{sl}(4,\mathbb{R})=\mathfrak{so}(4)\oplus\mathfrak{s},$ where
\[
\mathfrak{s}=\{X\in\mathfrak{sl}(4,\mathbb{R}):X^{T}=X\}.
\]
A maximal abelian subalgebra of $\mathfrak{s}$ is $\mathfrak{a}=\left\{\operatorname{diag}(a_1,a_2,a_3,a_4):a_1+a_2+a_3+a_4=0\right\}.$ For $1\leq i\neq j\leq 4,$ define $\alpha_{ij}:\mathfrak{a}\to\mathbb{R}$ by
\[
\alpha_{ij}\left(\operatorname{diag}(a_1,a_2,a_3,a_4)\right):=a_i-a_j.
\]
Then the root system of $\mathfrak{sl}(4,\mathbb{R})$ associated with $\mathfrak{a}$ is $\Pi=\{\alpha_{ij}:1\leq i\neq j\leq 4\}$ and the corresponding root spaces are
\[
\mathfrak{g}_{\alpha_{ij}}=\mathbb{R}E_{ij},\ 1\leq i\neq j\leq 4,
\]
where $E_{ij}$ is the $4\times 4$ matrix with a 1 in its $(i,j)$-entry and zero elsewhere. Choosing $\Pi^+=\{\alpha_{ij}:1\leq i<j\leq4\},$ the associated minimal parabolic subalgebra is the subalgebra $\mathfrak{p}$ of upper triangular matrices in $\mathfrak{sl}(4,\mathbb{R}).$ Let $P$ be the normalizer of $\mathfrak{p}$ in $\operatorname{SL}(4,\mathbb{R}).$ The maximal real flag manifold of type $\mathcal{A}_3$ is 
\[
\mathbb{F}:=\mathrm{SL}(4,\mathbb{R})/P.
\]

The group $\mathrm{SO}(4)$ acts transitively on $\mathbb{F}$ with isotropy group
\[
\begin{aligned}
\mathrm{SO}(4)\cap P&=\left\{\operatorname{diag}(\varepsilon_1,\varepsilon_2,\varepsilon_3,\varepsilon_4):\varepsilon_j\in\{1,-1\},\ \varepsilon_1\varepsilon_2\varepsilon_3\varepsilon_4=1\right\}\\
&=\mathrm{S}\left(\mathrm{O}(1)\times\mathrm{O}(1)\times\mathrm{O}(1)\times\mathrm{O}(1)\right).
\end{aligned}
\]
Hence, 
\[
\mathbb{F}=\mathrm{SO}(4)/\mathrm{S}(\mathrm{O}(1)\times\mathrm{O}(1)\times\mathrm{O}(1)\times\mathrm{O}(1)).
\]
Let $B$ denote the Killing form of $\mathfrak{sl}(4,\mathbb{R})$ and endow $\mathfrak{so}(4)$ with the $\operatorname{Ad}(\operatorname{SO}(4))$-invariant inner product
\begin{equation}\label{eq:A3-inner-product}
(X,Y):=-\frac{1}{4}B(X,Y)
=-2\operatorname{Tr}(XY),\ X,Y\in\mathfrak{so}(4).
\end{equation}
The matrices 
\[
\begin{aligned}
X_{1}&=\frac{1}{2}(E_{21}-E_{12}),&
X_{2}&=\frac{1}{2}(E_{43}-E_{34}),\\
X_{3}&=\frac{1}{2}(E_{31}-E_{13}),&
X_{4}&=\frac{1}{2}(E_{42}-E_{24}),\\
X_{5}&=\frac{1}{2}(E_{32}-E_{23}),&
X_{6}&=\frac{1}{2}(E_{41}-E_{14})
\end{aligned}
\]
form an ordered orthonormal basis of
$\mathfrak{so}(4)$ with respect to $(\cdot,\cdot).$ We denote this basis by
\begin{equation}\label{eq:A3-basis}
\mathcal{B}:=(X_1,X_2,X_3,X_4,X_5,X_6).
\end{equation}
Since the isotropy group is discrete, the $\mathrm{SO}(4)$-invariant Riemannian metrics are determined by $\mathrm{Ad}(\mathrm{S}(\mathrm{O}(1)\times\mathrm{O}(1)\times\mathrm{O}(1)\times\mathrm{O}(1)))$-invariant inner products  on $\mathfrak{so}(4)$ and the $\mathrm{SO}(4)$-invariant almost complex structures on $\mathbb{F}$ are determined by $\mathrm{Ad}(\mathrm{S}(\mathrm{O}(1)\times\mathrm{O}(1)\times\mathrm{O}(1)\times\mathrm{O}(1)))$-equivariant endomorphisms $J:\mathfrak{so}(4)\to\mathfrak{so}(4)$ satisfying $J^2=-\mathrm{Id}_{\mathfrak{so}(4)}.$

\begin{proposition}[\GGNTheoremA]
A linear map $A:\mathfrak{so}(4)\to\mathfrak{so}(4)$ is the metric operator associated with an $\operatorname{SO}(4)$-invariant Riemannian metric on $\mathbb{F}$ if and only if its matrix with respect to $\mathcal{B}$ has the form
\begin{equation}\label{eq:A3-metric-operator}
[A]_{\mathcal{B}}=\operatorname{diag}(A_{1},A_{2},A_{3}),
\end{equation}
where
\begin{equation*}\label{eq:A3-metric-blocks}
A_{j}=\begin{pmatrix}
\mu_{j} & b_{j}\\
b_{j} & \xi_{j}
\end{pmatrix},
\ \mu_{j},\xi_{j}>0,\ \Delta_{j}:=\mu_{j}\xi_{j}-b_{j}^{2}>0, j=1,2,3.
\end{equation*}
\end{proposition}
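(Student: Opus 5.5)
By the correspondence recalled in Subsection~\ref{subsec:almost-Hermitian}, the statement amounts to describing the positive-definite, $(\cdot,\cdot)$-self-adjoint operators $A$ on $\mathfrak{so}(4)$ that commute with $\operatorname{Ad}(h)$ for every $h\in H:=\mathrm{S}(\mathrm{O}(1)\times\mathrm{O}(1)\times\mathrm{O}(1)\times\mathrm{O}(1))$. The plan is to decompose the isotropy representation of $H$ on $\mathfrak{so}(4)$ into isotypic components and then apply Schur's lemma.

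The first step is to compute the action of $H$ on the basis $\mathcal{B}$. For $h=\operatorname{diag}(\varepsilon_1,\varepsilon_2,\varepsilon_3,\varepsilon_4)$ we have $\operatorname{Ad}(h)E_{ij}=\varepsilon_i\varepsilon_jE_{ij}$. Hence each $X_k$ is an eigenvector of $\operatorname{Ad}(h)$ with eigenvalue $\varepsilon_i\varepsilon_j$, where $\{i,j\}$ is the index pair of $X_k$. Since $\varepsilon_1\varepsilon_2\varepsilon_3\varepsilon_4=1$, we get $\varepsilon_1\varepsilon_2=\varepsilon_3\varepsilon_4$, $\varepsilon_1\varepsilon_3=\varepsilon_2\varepsilon_4$ and $\varepsilon_2\varepsilon_3=\varepsilon_1\varepsilon_4$. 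Therefore $X_1,X_2$ transform by the same character $\chi_1(h)=\varepsilon_1\varepsilon_2$, the pair $X_3,X_4$ by $\chi_2(h)=\varepsilon_1\varepsilon_3$, and $X_5,X_6$ by $\chi_3(h)=\varepsilon_2\varepsilon_3$.

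The second step is to check that $\chi_1,\chi_2,\chi_3$ are pairwise distinct characters of $H$. The elements $\operatorname{diag}(1,1,-1,-1)$, $\operatorname{diag}(1,-1,1,-1)$ and $\operatorname{diag}(-1,1,1,-1)$ separate them: for instance, $\operatorname{diag}(1,1,-1,-1)$ gives $\chi_1=1$ and $\chi_2=\chi_3=-1$. So $\mathfrak{so}(4)$ splits as the sum of three isotypic components $\mathfrak{m}_j=\operatorname{span}\{X_{2j-1},X_{2j}\}$. Each $\mathfrak{m}_j$ is the sum of two equivalent one-dimensional submodules, and different $\mathfrak{m}_j$ are inequivalent.

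The third step applies Schur's lemma. An $H$-equivariant linear map sends each isotypic component into itself, so an equivariant $A$ is block diagonal, $[A]_{\mathcal{B}}=\operatorname{diag}(A_1,A_2,A_3)$ with arbitrary $2\times2$ blocks $A_j$. Conversely, any such block matrix commutes with every $\operatorname{Ad}(h)$, because $\operatorname{Ad}(h)$ acts on each $\mathfrak{m}_j$ as the scalar $\chi_j(h)$. Since $\mathcal{B}$ is $(\cdot,\cdot)$-orthonormal, self-adjointness means that each $A_j$ is symmetric, so $A_j=\begin{pmatrix}\mu_j&b_j\\ b_j&\xi_j\end{pmatrix}$. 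Positive-definiteness of the block-diagonal matrix is equivalent to positive-definiteness of each block. By Sylvester's criterion this holds exactly when $\mu_j>0$ and $\Delta_j=\mu_j\xi_j-b_j^2>0$, and these two conditions already force $\xi_j>0$.

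The argument is routine throughout. The only point needing care is the verification that the three characters are distinct, since this is what rules out off-diagonal blocks between different $\mathfrak{m}_j$.
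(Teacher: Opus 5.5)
Your argument is correct and complete. The characters $\varepsilon_i\varepsilon_j$ on the six basis vectors, the identities forced by $\varepsilon_1\varepsilon_2\varepsilon_3\varepsilon_4=1$, the separating elements you chose, and the Schur and Sylvester steps all check out. The paper itself gives no proof here---it imports the statement from \cite{GGN}---and your isotypic decomposition under the characters of the discrete isotropy group is the standard reasoning behind that reference, so it serves as a self-contained justification.
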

\begin{proposition}[\FBSMA]\label{prop:almost-complex}
A linear map $J\colon\mathfrak{so}(4)\to\mathfrak{so}(4)$ is an $\operatorname{SO}(4)$-invariant almost complex structure on $\mathbb{F}$ if and only if its matrix with respect to $\mathcal{B}$ has the form
\begin{equation}\label{eq:A3-almost-complex-structure}
[J]_{\mathcal{B}}=\operatorname{diag}(J_{1},J_{2},J_{3}),
\end{equation}
where
\[
J_{j}=\begin{pmatrix}
\alpha_{j} & \beta_{j}\\
\gamma_{j} & -\alpha_{j}
\end{pmatrix},
\ \alpha_{j}^{2}+\beta_{j}\gamma_{j}=-1,
\ j=1,2,3.
\]
None of these structures are integrable.
\end{proposition}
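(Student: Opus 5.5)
The plan has three parts. First I compute how the isotropy group acts on the basis $\mathcal{B}$. Then equivariance together with $J^2=-\operatorname{Id}$ pins down the block form. Finally a Nijenhuis computation rules out integrability.

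\textbf{Isotropy characters.} For $h=\operatorname{diag}(\varepsilon_1,\varepsilon_2,\varepsilon_3,\varepsilon_4)$ in $\mathrm{S}(\mathrm{O}(1)\times\mathrm{O}(1)\times\mathrm{O}(1)\times\mathrm{O}(1))$ we have
\[
\operatorname{Ad}(h)(E_{ij}-E_{ji})=\varepsilon_i\varepsilon_j(E_{ij}-E_{ji}).
\]
Because $\varepsilon_1\varepsilon_2\varepsilon_3\varepsilon_4=1$, each $X_k$ is an eigenvector, and the characters pair up:
\begin{itemize}
\item $X_1,X_2$ carry $\chi_1=\varepsilon_1\varepsilon_2$;
\item $X_3,X_4$ carry $\chi_2=\varepsilon_1\varepsilon_3$;
\item $X_5,X_6$ carry $\chi_3=\varepsilon_1\varepsilon_4$.
\end{itemize}
Choosing suitable sign patterns shows that $\chi_1,\chi_2,\chi_3$ are nontrivial and pairwise distinct. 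So $\mathfrak{so}(4)=\mathfrak{m}_1\oplus\mathfrak{m}_2\oplus\mathfrak{m}_3$ with $\mathfrak{m}_j=\operatorname{span}\{X_{2j-1},X_{2j}\}$ the isotypic components.

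\textbf{Block form.} An $\operatorname{Ad}(H)$-equivariant endomorphism must preserve each isotypic component, so $[J]_{\mathcal{B}}=\operatorname{diag}(J_1,J_2,J_3)$ with arbitrary real $2\times 2$ blocks. The condition $J^2=-\operatorname{Id}$ then holds blockwise, $J_j^2=-I$. For a $2\times2$ real matrix, Cayley--Hamilton shows this is equivalent to $\operatorname{tr}J_j=0$ and $\det J_j=1$. Writing $J_j=\begin{pmatrix}\alpha_j&\beta_j\\ \gamma_j&-\alpha_j\end{pmatrix}$, this is exactly $\alpha_j^2+\beta_j\gamma_j=-1$. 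Conversely, any such block matrix commutes with $\operatorname{Ad}(H)$, since $\operatorname{Ad}(h)$ acts as a scalar on each $\mathfrak{m}_j$. This proves the characterization.

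\textbf{Non-integrability.} I would first compute the brackets of the basis. One finds $[\mathfrak{m}_1,\mathfrak{m}_2]\subseteq\mathfrak{m}_3$ and cyclically, with each $\mathfrak{m}_j$ abelian; for instance $[X_1,X_3]$ and $[X_2,X_4]$ are $\pm\frac12 X_5$ up to sign, and so on. Since $\mathfrak{h}=0$, the bracket $[\cdot,\cdot]_{\mathfrak{m}}$ is the full bracket. By \eqref{eq:homogeneous-Nijenhuis}, $N(X_1,X_3)$ lies in $\mathfrak{m}_3$, and its two coordinates are explicit bilinear-type expressions in the entries of $J_1,J_2$ and linear in the entries of $J_3$.

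Equivalently, I would pass to $\mathfrak{m}^{1,0}$. It is spanned by one eigenvector $v_j=(\beta_j,\,i-\alpha_j)$ in each $(\mathfrak{m}_j)_{\mathbb{C}}$; here $\beta_j\neq0$ because $\beta_j\gamma_j=-1-\alpha_j^2<0$. Integrability would require $[v_1,v_2]\in\mathbb{C}v_3$, together with the cyclic analogues. Each condition is a single complex equation: the vanishing of a $2\times2$ determinant, linear in the coordinates of $v_3$ and bilinear in those of $v_1,v_2$.

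The main obstacle is showing that the three determinant conditions cannot hold simultaneously for any real parameters satisfying $\alpha_j^2+\beta_j\gamma_j=-1$. I would attack this as follows:
\begin{enumerate}
\item Use the first condition to solve for $v_3$ up to a complex scalar.
\item Substitute this into the second condition and solve for $v_1$ in terms of $v_2$.
\item Check that the remaining cyclic condition then forces some $v_j$ to be proportional to its own conjugate, i.e.\ to have real ratio of coordinates. This contradicts $\operatorname{Im}\bigl((i-\alpha_j)/\beta_j\bigr)=1/\beta_j\neq0$.
\end{enumerate}
If the algebra becomes messy, the fallback is to exploit the symmetry $\mathfrak{so}(4)\cong\mathfrak{su}(2)\oplus\mathfrak{su}(2)$ via $X_1\pm X_2$, $X_3\pm X_4$, $X_5\pm X_6$. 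In those coordinates the bracket decouples, which should make the obstruction to a closed $\mathfrak{m}^{1,0}$ transparent.
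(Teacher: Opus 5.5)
Your derivation of the block form is correct and complete. The isotropy characters $\varepsilon_1\varepsilon_2$, $\varepsilon_1\varepsilon_3$, $\varepsilon_1\varepsilon_4$ are nontrivial and pairwise distinct, so equivariant endomorphisms are block diagonal, and Cayley--Hamilton turns $J_j^2=-I$ into $\alpha_j^2+\beta_j\gamma_j=-1$. The paper gives no proof of this proposition and simply quotes it from \cite{FBSM}.

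The gap is the final sentence of the statement, non-integrability. Your reduction of $N=0$ to $[v_1,v_2]\in\mathbb{C}v_3$ and its cyclic analogues is right. However, you never evaluate these conditions: step~3 only asserts the contradiction you expect, and the $\mathfrak{su}(2)\oplus\mathfrak{su}(2)$ fallback shows you have not checked it. As written, non-integrability is not proved.

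It closes in a few lines once you normalize the eigenvectors. Since $\alpha_j\in\mathbb{R}$, you may divide $v_j$ by $i-\alpha_j$ and use $Z_j:=X_{2j}-z_jX_{2j-1}$ with $z_j=\beta_j/(\alpha_j-i)$. Then $\operatorname{Im}z_j=\beta_j/(1+\alpha_j^2)\neq0$. From the bracket table,
\[
[Z_1,Z_2]=\tfrac12(z_1z_2-1)X_5+\tfrac12(z_1-z_2)X_6,\qquad [Z_2,Z_3]=\tfrac12(z_2z_3-1)X_1+\tfrac12(z_3-z_2)X_2,
\]
so $[Z_1,Z_2]\in\mathbb{C}Z_3$ and $[Z_2,Z_3]\in\mathbb{C}Z_1$ become
\[
z_1z_2+z_1z_3-z_2z_3=1,\qquad -z_1z_2+z_1z_3+z_2z_3=1.
\]
Up to the nonzero factors $-2i\sigma_j$, these are the expressions the paper later obtains for $N(Z_1,Z_2,Z_3)$ and $N(Z_2,Z_3,Z_1)$.

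These equations are linear in the products $z_jz_k$, so you need not solve successively for $v_3$ and then $v_1$. Adding them gives $z_1z_3=1$. Subtracting gives $z_2(z_1-z_3)=0$, hence $z_1=z_3$ and $z_1^2=1$, contradicting $\operatorname{Im}z_1\neq0$. So two of the three cyclic conditions are already incompatible. This is exactly the ``$v_j$ proportional to its conjugate'' obstruction you predicted.
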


\section{The Gray-Hervella classes}

In this section, we describe the $\mathrm{SO}(4)$-invariant almost Hermitian structures on the maximal real flag manifold of type $\mathcal{A}_3$ and characterize the corresponding Gray-Hervella classes.\\

Consider the inner product $(\cdot,\cdot)$ defined in \eqref{eq:A3-inner-product}. Let $g$ be an $\operatorname{SO}(4)$-invariant Riemannian metric on $\mathbb{F},$ with metric operator $A$ of the form \eqref{eq:A3-metric-operator} and let $J$ be an invariant almost complex structure of the form \eqref{eq:A3-almost-complex-structure}.

\begin{proposition}\label{prop:A3-almost-Hermitian}
The pair $(g,J)$ is an invariant almost Hermitian structure if and only if
there exist $\epsilon_{1},\epsilon_{2},\epsilon_{3}\in\{1,-1\}$ such that
\begin{equation}\label{eq:A3-almost-Hermitian}
\alpha_j=\epsilon_j\frac{b_j}{\sqrt{\Delta_j}},\ 
\beta_j=\epsilon_j\frac{\xi_j}{\sqrt{\Delta_j}},\ 
\gamma_j=-\epsilon_j\frac{\mu_j}{\sqrt{\Delta_j}},\ 
j=1,2,3.
\end{equation}
\end{proposition}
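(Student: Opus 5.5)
Since both $[A]_{\mathcal{B}}$ and $[J]_{\mathcal{B}}$ are block diagonal with the same $2\times 2$ block structure, and $\mathcal{B}$ is $(\cdot,\cdot)$-orthonormal (so $J^T$ is represented by the transpose matrix), the compatibility condition \eqref{eq:compatible-metric-operator} $J^TAJ=A$ splits into the three independent block equations $J_j^TA_jJ_j=A_j$, $j=1,2,3$. So the plan is to solve a single $2\times 2$ problem: given $A_j$ symmetric positive definite and $J_j$ with $J_j^2=-I$, determine when $J_j^TA_jJ_j=A_j$.

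For the block problem, I will use $J_j^{-1}=-J_j$ to rewrite the condition as $J_j^TA_j=-A_jJ_j$, i.e. $(A_jJ_j)^T=-A_jJ_j$: the matrix $A_jJ_j$ must be skew-symmetric. This reformulation is linear in the entries of $J_j$. Computing
\[
A_jJ_j=\begin{pmatrix}\mu_j\alpha_j+b_j\gamma_j & \mu_j\beta_j-b_j\alpha_j\\ b_j\alpha_j+\xi_j\gamma_j & b_j\beta_j-\xi_j\alpha_j\end{pmatrix},
\]
skew-symmetry amounts to three linear equations: $\mu_j\alpha_j+b_j\gamma_j=0$, $b_j\beta_j-\xi_j\alpha_j=0$, and $\mu_j\beta_j-b_j\alpha_j=-(b_j\alpha_j+\xi_j\gamma_j)$. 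Since $\mu_j,\xi_j>0$, the first two give $\gamma_j=-\mu_j\alpha_j/b_j$-type relations. To avoid dividing by $b_j$, I will instead parametrize $A_jJ_j=\lambda\begin{pmatrix}0&1\\-1&0\end{pmatrix}$ for some $\lambda\in\mathbb{R}$. Then $J_j=\lambda A_j^{-1}\begin{pmatrix}0&1\\-1&0\end{pmatrix}$, and $A_j^{-1}=\Delta_j^{-1}\begin{pmatrix}\xi_j&-b_j\\-b_j&\mu_j\end{pmatrix}$ yields
\[
J_j=\frac{\lambda}{\Delta_j}\begin{pmatrix} b_j & \xi_j\\ -\mu_j & -b_j\end{pmatrix}.
\]
This automatically has trace zero, matching the form in Proposition~\ref{prop:almost-complex}.

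Next I impose $J_j^2=-I$, equivalently $\alpha_j^2+\beta_j\gamma_j=-1$. This gives $\frac{\lambda^2}{\Delta_j^2}(b_j^2-\mu_j\xi_j)=-1$, so $\lambda^2=\Delta_j$ and $\lambda=\epsilon_j\sqrt{\Delta_j}$ with $\epsilon_j\in\{\pm1\}$. Substituting produces exactly \eqref{eq:A3-almost-Hermitian}. For the converse, I will check directly that these formulas satisfy $\alpha_j^2+\beta_j\gamma_j=(b_j^2-\mu_j\xi_j)/\Delta_j=-1$, so $J$ is an invariant almost complex structure by Proposition~\ref{prop:almost-complex}, and that $A_jJ_j=\epsilon_j\sqrt{\Delta_j}\begin{pmatrix}0&1\\-1&0\end{pmatrix}$ is skew, hence compatible. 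No step is a real obstacle. The only point needing care is the justification that $J^TAJ=A$ is equivalent to skewness of $AJ$, which uses $J^2=-\mathrm{Id}$, and the observation that the skew $2\times2$ matrices form a one-dimensional space, which avoids any case split on $b_j=0$.
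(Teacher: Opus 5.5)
Your proof is correct and follows essentially the paper's route. Both reduce compatibility blockwise to the linear conditions $\mu_j\alpha_j+b_j\gamma_j=0$, $\mu_j\beta_j+\xi_j\gamma_j=0$, $b_j\beta_j-\xi_j\alpha_j=0$; the paper solves this rank-two system directly, while you package it as skewness of $A_jJ_j$ and invert $A_j$. Both then use $\alpha_j^2+\beta_j\gamma_j=-1$ to fix the scalar, which is the paper's $\epsilon_j/\sqrt{\Delta_j}$ and your $\lambda=\epsilon_j\sqrt{\Delta_j}$; your explicit remark that this linearization uses $J^2=-\mathrm{Id}$ makes precise a step the paper leaves implicit.
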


\begin{proof}
By \eqref{eq:compatible-metric-operator}, $g$ and $J$ are compatible if and only if $J_j^T A_jJ_j=A_j$ for $j=1,2,3.$ This condition is equivalent to 
\begin{equation*}
    \begin{cases}
        \mu_j\alpha_j+b_j\gamma_j=0,\\
        \mu_j\beta_j+\xi_j\gamma_j=0,\\
        b_j\beta_j-\xi_j\alpha_j=0.
    \end{cases}
\end{equation*}Solving this linear system yields $(\alpha_j,\beta_j,\gamma_j)=\lambda_j(b_j,\xi_j,-\mu_j)$ for some $\lambda_j\in\mathbb{R}.$ Since $\alpha_j^2+\beta_j\gamma_j=-1,$ there exist $\epsilon_j\in\{1,-1\}$ such that $\lambda_j=\epsilon_j/\sqrt{\Delta_j}.$ Therefore, \eqref{eq:A3-almost-Hermitian} holds. Conversely, if $J$ is defined by $\alpha_j,\beta_j,\gamma_j$ as in \eqref{eq:A3-almost-Hermitian}, then it is straightforward to verify that $\alpha_j^2+\beta_j\gamma_j=-1$ and $J_j^TA_jJ_j=A_j,$ for $j=1,2,3.$ Thus, $(g,J)$ is an invariant almost Hermitian structure.
\end{proof}

In the preceding proposition, the signs $\epsilon_1,\epsilon_2,\epsilon_3\in\{1,-1\}$ may be chosen arbitrarily. Consequently, the set of all invariant almost Hermitian structures on $\mathbb{F}$ is parametrized by the set
\begin{equation*}\label{eq:A3-parameters-almost-Hermitian}
\mathcal{P}:=\left\{(\mu_j,\xi_j,b_j,\epsilon_j)_{j=1}^{3}\in\mathbb{R}^9\times\{1,-1\}^3:\mu_j,\xi_j,\Delta_j>0,\ j=1,2,3\right\}.
\end{equation*}

We can also obtain another parametrization as follows. Let
\begin{equation*}\label{eq:A3-complex-parameters}
\mathcal{Z}:=\left\{(z_j,\sigma_j)_{j=1}^{3}\in\mathbb{C}^3\times\mathbb{R}^3:\sigma_j\operatorname{Im}(z_j)>0,\ j=1,2,3\right\},
\end{equation*}
and consider the map
\[
\mathcal{P}\ni(\mu_j,\xi_j,b_j,\epsilon_j)_{j=1}^3\longmapsto(z_j,\sigma_j)_{j=1}^3\in\mathcal{Z}
\]
defined by
\begin{equation}\label{eq:P-to-Z}
z_j=\frac{b_j+i\sigma_j}{\mu_j},\ \sigma_j=\epsilon_j\sqrt{\Delta_j},\ j=1,2,3.
\end{equation}
Then, it is easy to verify that this map is a diffeomorphism whose inverse is given by
\begin{equation}\label{eq:Z-to-P}
\mu_j=\frac{\sigma_j}{\operatorname{Im}(z_j)},\ b_j=\frac{\sigma_j\operatorname{Re}(z_j)}{\operatorname{Im}(z_j)},\ \xi_j=\frac{\sigma_j|z_j|^2}{\operatorname{Im}(z_j)},\ \epsilon_j=\frac{\sigma_j}{|\sigma_j|},\ j=1,2,3.
\end{equation}

For the rest of this section, we assume that $(g,J)$ is an invariant
almost Hermitian structure. Let
$(\theta^1,\theta^2,\theta^3,\theta^4,\theta^5,\theta^6)$ denote the dual basis of $\mathcal{B},$ as defined in \eqref{eq:A3-basis}, and write $\theta^{jk}:=\theta^j\wedge\theta^k,$ $\theta^{jk\ell}:=\theta^j\wedge\theta^k\wedge\theta^\ell.$ The fundamental form $\omega$ associated with $(g,J)$ is given by
\[
\omega(X,Y)=g(JX,Y)=(AJX,Y),
\ X,Y\in\mathfrak{so}(4).
\]
Equivalently,
\[
\omega(X,Y)=[X]_{\mathcal{B}}^{T}
[J]_{\mathcal{B}}^{T}[A]_{\mathcal{B}}[Y]_{\mathcal{B}},
\]
where $[X]_{\mathcal{B}}$ and $[Y]_{\mathcal{B}}$ are the coordinate vectors of $X$ and $Y$ with respect to $\mathcal{B}$, regarded as column vectors. Since $\mathcal{B}$ is $(\cdot,\cdot)$-orthonormal, we have that $\omega(X_r,X_s)$ is the $(r,s)$-entry of the matrix $[J]_{\mathcal{B}}^{T}[A]_{\mathcal{B}}.$ By \eqref{eq:A3-almost-Hermitian} and \eqref{eq:P-to-Z},
\begin{equation*}
[J]_{\mathcal{B}}^T
[A]_{\mathcal{B}}
=
\mathrm{diag}\left(
\sigma_1\begin{pmatrix}0&-1\\1&0\end{pmatrix},
\sigma_2\begin{pmatrix}0&-1\\1&0\end{pmatrix},
\sigma_3\begin{pmatrix}0&-1\\1&0\end{pmatrix}
\right).
\end{equation*}
Therefore,
\begin{equation*}
\omega
=
-\sigma_1\theta^{12}
-\sigma_2\theta^{34}
-\sigma_3\theta^{56}.
\end{equation*}

The nonzero Lie brackets among $X_1,...,X_6$ are
\begin{equation*}
\begin{array}{lll}
\displaystyle [X_1,X_3]=\frac{1}{2}X_5,
&
\displaystyle [X_1,X_4]=-\frac{1}{2}X_6,
&
\displaystyle [X_1,X_5]=-\frac{1}{2}X_3,
\\[1em]
\displaystyle [X_1,X_6]=\frac{1}{2}X_4,
&
\displaystyle [X_2,X_3]=\frac{1}{2}X_6,
&
\displaystyle [X_2,X_4]=-\frac{1}{2}X_5,
\\[1em]
\displaystyle [X_2,X_5]=\frac{1}{2}X_4,
&
\displaystyle [X_2,X_6]=-\frac{1}{2}X_3,
&
\displaystyle [X_3,X_5]=\frac{1}{2}X_1,
\\[1em]
\displaystyle [X_3,X_6]=\frac{1}{2}X_2,
&
\displaystyle [X_4,X_5]=-\frac{1}{2}X_2,
&
\displaystyle [X_4,X_6]=-\frac{1}{2}X_1.
\end{array}
\end{equation*}

Using \eqref{eq:domega} and the bracket relations above, we have that the only possibly nonzero values of $d\omega$ (up to permutations of its entries) are
\begin{equation*}
\begin{aligned}
d\omega(X_1,X_3,X_6)
&=\frac12(-\sigma_1+\sigma_2+\sigma_3),\\
d\omega(X_1,X_4,X_5)
&=\frac12(\sigma_1+\sigma_2+\sigma_3),\\
d\omega(X_2,X_3,X_5)
&=\frac12(\sigma_1+\sigma_2-\sigma_3),\\
d\omega(X_2,X_4,X_6)
&=\frac12(-\sigma_1+\sigma_2-\sigma_3).
\end{aligned}
\end{equation*}
Therefore,
\begin{equation}\label{eq:A3-domega}
d\omega
=
\frac12
\left(
A\theta^{136}
+B\theta^{145}
+C\theta^{235}
+D\theta^{246}
\right),
\end{equation}
where
\begin{equation*}
\begin{aligned}
&A=-\sigma_1+\sigma_2+\sigma_3,
&
&B=\sigma_1+\sigma_2+\sigma_3,\\
&C=\sigma_1+\sigma_2-\sigma_3,
&
&D=-\sigma_1+\sigma_2-\sigma_3.
\end{aligned}
\end{equation*}

%\begin{proposition}
%There are no invariant almost Hermitian structures on $\mathbb{F}$ belonging to the Gray-Hervella class $\mathcal{W}_{\{3,4\}}$.
%\end{proposition}
%\begin{proof}
   %As the class $\mathcal{W}_{\{3,4\}}$ corresponds to integrable structures, the proof follows directly from the proposition \ref{prop:almost-complex}.
%\end{proof}

\begin{proposition}\label{prop:A3-W4}
For every invariant almost Hermitian structure on $\mathbb{F},$ the $\mathcal{W}_4$-component of $\nabla\omega$ vanishes, i.e. every pair $(g,J)$ is co-symplectic.
\end{proposition}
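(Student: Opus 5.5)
The plan is to use the characterization recalled in Subsection~\ref{subsec:almost-Hermitian}: $(\nabla\omega)_4=0$ if and only if the Lee form $\alpha_\omega$ vanishes. I will not compute $\delta\omega$ through the Nomizu map. Instead I will detect $\alpha_\omega$ through the non-primitive part of $d\omega$, which only requires the explicit formula \eqref{eq:A3-domega}.

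\textbf{Step 1: reduce to $d\omega\wedge\omega=0$.} Here $m=3$, so primitivity of a $3$-form $\eta$ means $\eta\wedge\omega=0$. We have
\[
d\omega=(d\omega)^-+(d\omega)^+_0+\tfrac12\,\alpha_\omega\wedge\omega,
\]
and both $(d\omega)^-$ and $(d\omega)^+_0$ are primitive. Hence
\[
d\omega\wedge\omega=\tfrac12\,\alpha_\omega\wedge\omega^2 .
\]
So it suffices to show two things: that $d\omega\wedge\omega=0$, and that the map $\Lambda^1\mathfrak{m}^*\ni\alpha\mapsto\alpha\wedge\omega^2\in\Lambda^5\mathfrak{m}^*$ is injective. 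The second is the Lefschetz isomorphism. I will check it directly in coordinates so the argument is self-contained. From $\omega=-\sigma_1\theta^{12}-\sigma_2\theta^{34}-\sigma_3\theta^{56}$ we get
\[
\omega^2=2\left(\sigma_1\sigma_2\theta^{1234}+\sigma_1\sigma_3\theta^{1256}+\sigma_2\sigma_3\theta^{3456}\right).
\]
Write $\alpha=\sum_k a_k\theta^k$. Each $\theta^k$ wedges nontrivially with exactly one of these three $4$-forms, namely the one not containing $k$. It produces the $5$-form $\theta^{1\cdots\hat{k}\cdots 6}$, up to sign, with coefficient a product $\sigma_i\sigma_j\neq0$. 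Distinct $k$ give distinct basis $5$-forms, so $\alpha\wedge\omega^2=0$ forces every $a_k=0$.

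\textbf{Step 2: compute $d\omega\wedge\omega$.} Using \eqref{eq:A3-domega}, each term of $d\omega$ is a multiple of $\theta^{136},\theta^{145},\theta^{235}$ or $\theta^{246}$. Each of these index triples contains exactly one index from each of the pairs $\{1,2\},\{3,4\},\{5,6\}$. Therefore each of $\theta^{12},\theta^{34},\theta^{56}$ shares an index with every term of $d\omega$, and every product $\theta^{abc}\wedge\theta^{jk}$ occurring in $d\omega\wedge\omega$ vanishes. This gives $d\omega\wedge\omega=0$ identically in the parameters $(\sigma_j)$.

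Combining the two steps yields $\alpha_\omega=0$, hence $(\nabla\omega)_4=0$. Then $(g,J)\in\mathcal{W}_{\{1,2,3\}}$, i.e.\ every pair is co-symplectic.

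The only delicate point I anticipate is Step 1. One must make sure the normalization $\frac1{m-1}$ and the primitivity of $(d\omega)^-$ and $(d\omega)^+_0$ are used exactly as stated in the preliminaries. Constants are irrelevant, though, since we only need the vanishing of $\alpha_\omega\wedge\omega^2$. Should one prefer to avoid this, a fallback is to compute $\delta\omega(Z)=-\sum_a(\nabla\omega)(e_a,e_a,Z)$ directly with a $g$-orthonormal basis adapted to the blocks $A_j$. That route is considerably messier because the metric is not diagonal in $\mathcal{B}$.
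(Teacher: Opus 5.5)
Your proof is correct and follows the paper's argument: each of $\theta^{136},\theta^{145},\theta^{235},\theta^{246}$ contains exactly one index from each of the pairs $\{1,2\},\{3,4\},\{5,6\}$, so $d\omega\wedge\omega=0$, hence $d\omega$ is primitive and $\alpha_\omega=0$. Your Step~1, which checks in coordinates that $\alpha\mapsto\alpha\wedge\omega^2$ is injective, only spells out the implication ``$d\omega$ primitive $\Rightarrow\alpha_\omega=0$'' that the paper takes directly from its preliminaries.
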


\begin{proof}
Observe that $d\omega$ is a linear combination of $\theta^{136},\theta^{145},\theta^{235},\theta^{246},$ and each of these $3$-forms contains exactly one index from each of the pairs $\{1,2\},\{3,4\},\{5,6\}.$ Therefore, its wedge product with each of $\theta^{12},\theta^{34},\theta^{56}$ vanishes, since at least one index is repeated. Hence, $d\omega\wedge\omega=0.$ Since $\dim\mathbb{F}=6,$ $d\omega$ is primitive. Thus, $\alpha_\omega=0$ and the $\mathcal{W}_4$-component of $\nabla\omega$ vanishes.
\end{proof}

\begin{corollary}
There are the following equivalences:

I) $\mathcal{W}_{\{1,4\}} \approx \mathcal{W}_1$;
II) $\mathcal{W}_{\{2,4\}} \approx \mathcal{W}_2$;
III) $\mathcal{W}_{\{3,4\}} \approx \mathcal{W}_3$

IV) $\mathcal{W}_{\{1,2,4\}} \approx \mathcal{W}_{1,2}$;
V) $\mathcal{W}_{\{1,3,4\}} \approx \mathcal{W}_{\{1,3\}}$;
VI) $\mathcal{W}_{\{2,3,4\}} \approx \mathcal{W}_{\{2,3\}}$

VII) $\mathcal{W}_{\{1,2,3\}} \approx \mathcal{W}_{\{1,2,3,4\}}$
\end{corollary}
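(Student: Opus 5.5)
The corollary is a direct consequence of Proposition~\ref{prop:A3-W4}. Each equivalence $\mathcal{W}_{I\cup\{4\}}\approx\mathcal{W}_I$ is read as follows: an invariant almost Hermitian structure $(g,J)$ on $\mathbb{F}$ belongs to $\mathcal{W}_{I\cup\{4\}}$ if and only if it belongs to $\mathcal{W}_I$, where $I\subseteq\{1,2,3\}$. The seven items correspond to $I=\{1\},\{2\},\{3\},\{1,2\},\{1,3\},\{2,3\},\{1,2,3\}$. The same argument also covers $I=\emptyset$, giving $\mathcal{W}_4\approx\mathcal{W}_\emptyset$, although this case is not listed.

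Fix such an $I$ and an invariant structure $(g,J)$. Decompose $\nabla\omega=\sum_{j=1}^4(\nabla\omega)_j$ according to the Gray–Hervella splitting.

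The inclusion $\mathcal{W}_I\subseteq\mathcal{W}_{I\cup\{4\}}$ is trivial. If $(\nabla\omega)_j=0$ for all $j\notin I$, then in particular $(\nabla\omega)_j=0$ for all $j\notin I\cup\{4\}$.

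For the converse, suppose $(g,J)\in\mathcal{W}_{I\cup\{4\}}$, so $(\nabla\omega)_j=0$ for every $j\notin I\cup\{4\}$. Proposition~\ref{prop:A3-W4} gives $(\nabla\omega)_4=0$ for every invariant structure on $\mathbb{F}$. Therefore $(\nabla\omega)_j=0$ for every $j\notin I$, that is, $(g,J)\in\mathcal{W}_I$.

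Equivalently, in terms of the characterization following the proposition citing \cite[Proposition~1]{G}, the condition ``$\alpha_\omega=0$ whenever $4\notin I$'' always holds on $\mathbb{F}$. So the lists of conditions defining $\mathcal{W}_I$ and $\mathcal{W}_{I\cup\{4\}}$ coincide.

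There is no real obstacle here. The only point needing care is to make explicit that ``$\approx$'' means equality of the corresponding sets of invariant structures on $\mathbb{F}$ (as subsets of $\mathcal{Z}$), not an isomorphism of the abstract $\mathrm{U}(3)$-modules, which are of course different.
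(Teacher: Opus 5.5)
Your proof is correct and follows the paper's reasoning. The paper gives no separate argument: the corollary follows immediately from Proposition~\ref{prop:A3-W4}, since $(\nabla\omega)_4=0$ for every invariant structure. Your reading of $\approx$, namely that $\mathcal{W}_I$ and $\mathcal{W}_{I\cup\{4\}}$ single out the same set of invariant structures on $\mathbb{F}$, is exactly how the introduction states it.
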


To obtain information about the other components of $\nabla\omega,$ set
\[
Z_j:=X_{2j}-z_jX_{2j-1},\ j=1,2,3,
\]
where the $z_j$ are given in \eqref{eq:P-to-Z}. Then $\{Z_1,Z_2,Z_3\}$ is a basis of $\mathfrak{m}^{1,0}$ and equation \eqref{eq:A3-domega} gives
\begin{align*}
d\omega(Z_1,Z_2,Z_3)
&=
\frac12
\left(
Az_1z_2+Bz_1z_3+Cz_2z_3+D
\right),
\\[1em]
d\omega(\overline{Z_1},Z_2,Z_3)
&=
\frac12
\left(
A\overline{z_1}z_2
+B\overline{z_1}z_3
+Cz_2z_3+D
\right),
\\[1em]
d\omega(Z_1,\overline{Z_2},Z_3)
&=
\frac12
\left(
Az_1\overline{z_2}
+Bz_1z_3
+C\overline{z_2}z_3+D
\right),
\\[1em]
d\omega(Z_1,Z_2,\overline{Z_3})
&=
\frac12
\left(
Az_1z_2
+Bz_1\overline{z_3}
+Cz_2\overline{z_3}+D
\right).
\end{align*}

Using \eqref{eq:homogeneous-Nijenhuis}, we also obtain
\[
\begin{aligned}
&N(Z_2,Z_3,Z_1)=-2i\sigma_1\left(z_2z_3-z_1z_2+z_1z_3-1\right),\\
&N(Z_3,Z_1,Z_2)=-2i\sigma_2\left(z_1z_2+z_1z_3+z_2z_3+1\right),\\
&N(Z_1,Z_2,Z_3)=-2i\sigma_3\left(z_1z_2+z_1z_3-z_2z_3-1\right).
\end{aligned}
\]
The only possibly nonzero values of $N^0\big{|}_{\mathfrak{m}^{1,0}\times\mathfrak{m}^{1,0}\times\mathfrak{m}^{1,0}}$ are
\[
\begin{aligned}
N^0(Z_2,Z_3,Z_1)&=N(Z_2,Z_3,Z_1)-\mathfrak{b}N(Z_1,Z_2,Z_3),\\
N^0(Z_3,Z_1,Z_2)
&=N(Z_3,Z_1,Z_2)-\mathfrak{b}N(Z_1,Z_2,Z_3),\\
N^0(Z_1,Z_2,Z_3)&=N(Z_1,Z_2,Z_3)-\mathfrak{b}N(Z_1,Z_2,Z_3).
\end{aligned}
\]
Consequently, $N^0\big|_{\mathfrak{m}^{1,0}\times\mathfrak{m}^{1,0}\times\mathfrak{m}^{1,0}}=0$ if and only if
\[
N(Z_2,Z_3,Z_1)=N(Z_3,Z_1,Z_2)=N(Z_1,Z_2,Z_3).
\]

By Remark~\ref{rem:type-reductions} and the computations above, we have
\begin{align}
(\nabla\omega)_1=0
&\Longleftrightarrow
Az_1z_2+Bz_1z_3+Cz_2z_3+D=0,
\label{eq:A3-W1-condition}
\\
(\nabla\omega)_2=0
&\Longleftrightarrow
\begin{cases}
\sigma_1(z_2z_3-z_1z_2+z_1z_3-1)
=
\sigma_2(z_1z_2+z_1z_3+z_2z_3+1),
\\
\sigma_2(z_1z_2+z_1z_3+z_2z_3+1)
=
\sigma_3(z_1z_2+z_1z_3-z_2z_3-1),
\end{cases}
\label{eq:A3-W2-condition}
\\
(\nabla\omega)_3=0
&\Longleftrightarrow
\begin{cases}
A\overline{z_1}z_2
+B\overline{z_1}z_3
+Cz_2z_3+D=0,
\\
Az_1\overline{z_2}
+Bz_1z_3
+C\overline{z_2}z_3+D=0,
\\
Az_1z_2
+Bz_1\overline{z_3}
+Cz_2\overline{z_3}+D=0,
\end{cases}
\label{eq:A3-W3-condition}
\end{align}
where each system must be considered subject to 
\begin{equation}\label{eq:A3-admissibility}
\sigma_j\operatorname{Im}(z_j)>0,\ j=1,2,3.
\end{equation}
\begin{proposition}\label{prop:A3-empty-classes}
For every $I\in\{\emptyset,\{2\},\{3\},\{4\}\},$ there are no invariant almost Hermitian structures on $\mathbb{F}$ belonging to the Gray-Hervella class $\mathcal{W}_I.$
\end{proposition}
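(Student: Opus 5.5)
The argument splits according to whether the class forces integrability or forces $d\omega=0$; neither requires solving \eqref{eq:A3-W1-condition}--\eqref{eq:A3-W3-condition} directly.

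\emph{The classes $\mathcal{W}_{\emptyset}$, $\mathcal{W}_3$ and $\mathcal{W}_4$.} Each of these is contained in $\mathcal{W}_{\{3,4\}}=\mathcal{W}_3\oplus\mathcal{W}_4$. As recalled in Section~2, this is exactly the class of Hermitian structures, so any structure in it has integrable $J$, i.e.\ $N=0$. More concretely, the Nijenhuis tensor is determined by the $\mathcal{W}_1\oplus\mathcal{W}_2$ part of $\nabla\omega$. This can be read off from the formula expressing $N$ in terms of $\nabla\omega$: the $\mathcal{W}_3$ and $\mathcal{W}_4$ components satisfy $(\nabla\omega)(JX,JY,Z)=(\nabla\omega)(X,Y,Z)$, and for such tensors the four terms in that formula cancel in pairs. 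Hence $(\nabla\omega)_1=(\nabla\omega)_2=0$ forces $N=0$. But Proposition~\ref{prop:almost-complex} states that no invariant almost complex structure on $\mathbb{F}$ is integrable. Therefore these three classes are empty.

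For $\mathcal{W}_4$ alone there is an even shorter route. By Proposition~\ref{prop:A3-W4}, $(\nabla\omega)_4=0$ always holds, so $\mathcal{W}_4$ and $\mathcal{W}_\emptyset$ contain the same structures. The Kähler case then reduces to the integrability obstruction above.

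\emph{The class $\mathcal{W}_2$.} A structure in $\mathcal{W}_2$ has vanishing $\mathcal{W}_1$, $\mathcal{W}_3$ and $\mathcal{W}_4$ components. Since $\alpha_\omega=0$ by Proposition~\ref{prop:A3-W4}, this gives $(d\omega)^-=0$ and $(d\omega)^+_0=0$. Hence $d\omega=(d\omega)^+_0+\tfrac12\alpha_\omega\wedge\omega=0$, i.e.\ the structure is almost Kähler.

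By \eqref{eq:A3-domega}, the forms $\theta^{136},\theta^{145},\theta^{235},\theta^{246}$ are linearly independent, so $d\omega=0$ forces $A=B=C=D=0$. From $B-A=2\sigma_1=0$ we get $\sigma_1=0$. This contradicts $\sigma_1\neq0$, which is required since $\sigma_1=\epsilon_1\sqrt{\Delta_1}$ with $\Delta_1>0$ (equivalently, by the admissibility condition \eqref{eq:A3-admissibility}). Thus $\mathcal{W}_2$ is empty as well.

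\emph{Main point needing care.} The only step that needs justification is that $(\nabla\omega)_1=(\nabla\omega)_2=0$ implies $N=0$. I would rely on the standard Gray--Hervella fact, quoted in the terminology list of Section~2, that the Hermitian structures are precisely those in $\mathcal{W}_3\oplus\mathcal{W}_4$.
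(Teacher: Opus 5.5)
Your proposal is correct and essentially matches the paper's proof: you rule out $\mathcal{W}_2$ by noting that $d\omega=0$ forces $B-A=2\sigma_1=0$, and you rule out $\mathcal{W}_3$ and $\mathcal{W}_4$ by the non-integrability in Proposition~\ref{prop:almost-complex}. The only differences are that you treat $\mathcal{W}_\emptyset$ through integrability rather than through $\mathcal{W}_\emptyset\subseteq\mathcal{W}_2$, and you add a valid check that the $\mathcal{W}_3\oplus\mathcal{W}_4$ part of $\nabla\omega$ drops out of the formula for $N$.
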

\begin{proof}
Suppose first that $(g,J)$ belongs to $\mathcal{W}_2.$ Then $(d\omega)^-=(d\omega)^+_0=0.$ Moreover, by Proposition~\ref{prop:A3-W4}, $\alpha_\omega=0,$ so $d\omega=0.$ By \eqref{eq:A3-domega}, this implies that $A=B=C=D=0.$ However, $B-A=2\sigma_1\neq0,$ which is a contradiction. Hence, there are no invariant almost Hermitian structures belonging to $\mathcal{W}_2.$ Since $\mathcal{W}_{\emptyset}\subseteq\mathcal{W}_2,$ the same conclusion holds for $\mathcal{W}_{\emptyset}.$ Finally, if $(g,J)$ belongs to $\mathcal{W}_3$ or to $\mathcal{W}_4$ , then $J$ is integrable, which contradicts Proposition~\ref{prop:almost-complex}. Therefore, there are no invariant almost Hermitian structures belonging to $\mathcal{W}_3$ or to $\mathcal{W}_4$.
\end{proof}

\begin{proposition}\label{prop:A3-W12}
An invariant almost Hermitian structure $(z_j,\sigma_j)_{j=1}^{3}$ on $\mathbb{F}$ belongs to the Gray-Hervella class $\mathcal{W}_{\{1,2\}}$ if and only if
\begin{equation}\label{eq:A3-W12-triangle}
|\sigma_1|<|\sigma_2|+|\sigma_3|,\ |\sigma_2|<|\sigma_1|+|\sigma_3|,\ |\sigma_3|<|\sigma_1|+|\sigma_2|,
\end{equation}
and
\begin{equation}\label{eq:A3-W12-z}
z_1=i\frac{\sigma_1}{|\sigma_1|}\sqrt{-\frac{CD}{AB}},\ z_2=i\frac{\sigma_2}{|\sigma_2|}\sqrt{-\frac{BD}{AC}},\ z_3=i\frac{\sigma_3}{|\sigma_3|}\sqrt{-\frac{AD}{BC}}.
\end{equation}
Consequently, up to homothety, the set of invariant almost Hermitian structures on $\mathbb{F}$ that belong to $\mathcal{W}_{\{1,2\}}$ is a disjoint union of eight manifolds, each of real dimension $2.$
\end{proposition}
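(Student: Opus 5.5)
The plan is to use the reduction already established. By Proposition~\ref{prop:A3-W4} the $\mathcal{W}_4$-component always vanishes, so $(g,J)\in\mathcal{W}_{\{1,2\}}$ if and only if $(\nabla\omega)_3=0$. That is, the three equations \eqref{eq:A3-W3-condition} must hold, subject to \eqref{eq:A3-admissibility}. Write
\[
F(w_1,w_2,w_3):=Aw_1w_2+Bw_1w_3+Cw_2w_3+D .
\]
This polynomial has real coefficients and is affine in each variable separately. In this notation \eqref{eq:A3-W3-condition} reads
\[
F(\overline{z_1},z_2,z_3)=F(z_1,\overline{z_2},z_3)=F(z_1,z_2,\overline{z_3})=0 .
\]
Conjugating these gives three more vanishing values: $F(z_1,\overline{z_2},\overline{z_3})$, $F(\overline{z_1},z_2,\overline{z_3})$ and $F(\overline{z_1},\overline{z_2},z_3)$.

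Next I would subtract pairs of vanishing values that differ in exactly one slot, using multi-affinity. Since $\operatorname{Im}(z_j)\neq0$ by \eqref{eq:A3-admissibility}, the factor $z_j-\overline{z_j}$ can be cancelled. This gives the linear relations
\[
Bz_1+C\overline{z_2}=0,\qquad Az_1+C\overline{z_3}=0,\qquad A\overline{z_2}+Bz_3=0 .
\]
First I would show that $A,B,C\neq0$. For example, if $C=0$, then $z_1\neq0$ forces $B=0$, and then $A=0$. But then $B-A=2\sigma_1=0$, which is impossible; the other cases are analogous. Then $\overline{z_3}=-Az_1/C$, while the third relation gives $z_3=Az_1/C$. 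Comparing these yields $z_1=-\overline{z_1}$, so every $z_j$ is purely imaginary. Writing $z_j=it_j$, the relations become $t_2=Bt_1/C$ and $t_3=At_1/C$. Substituting into $F(\overline{z_1},z_2,z_3)=0$ gives $ABt_1^2/C+D=0$. Hence
\[
t_1^2=-\frac{CD}{AB},\qquad t_2^2=-\frac{BD}{AC},\qquad t_3^2=-\frac{AD}{BC}.
\]
In particular $D\neq0$, and the admissibility condition $\operatorname{sign}(t_j)=\operatorname{sign}(\sigma_j)$ yields \eqref{eq:A3-W12-z}. Conversely, purely imaginary $z_j$ satisfying these relations make all three equations hold, by reversing the computation.

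The main obstacle is showing that solvability is equivalent to \eqref{eq:A3-W12-triangle}. Solvability means that the three quotients above are positive and that the sign constraints $\operatorname{sign}(t_1t_2)=\operatorname{sign}(B/C)$ and $\operatorname{sign}(t_1t_3)=\operatorname{sign}(A/C)$ agree with $\operatorname{sign}(\sigma_1\sigma_2)$ and $\operatorname{sign}(\sigma_1\sigma_3)$. I would first treat $\sigma_1,\sigma_2,\sigma_3>0$. There $B>0$, and the constraints force $A,C>0$ and $D<0$; these are exactly $\sigma_1<\sigma_2+\sigma_3$, $\sigma_3<\sigma_1+\sigma_2$ and $\sigma_2<\sigma_1+\sigma_3$. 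For the other sign patterns, flipping the sign of one $\sigma_j$ permutes $\{A,B,C,D\}$ up to sign; for instance $\sigma_1\mapsto-\sigma_1$ swaps $A\leftrightarrow B$ and $C\leftrightarrow -D$. I would check that this transformation carries the system together with its sign constraints to the analogous one. The case analysis then reduces to the positive case, with the $\sigma_j$ replaced by $|\sigma_j|$.

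Finally, for each of the eight sign patterns, the admissible $(|\sigma_1|,|\sigma_2|,|\sigma_3|)$ form an open convex cone in $\mathbb{R}^3$ (the strict triangle inequalities). The $z_j$ are smooth functions of $\sigma$ on this cone, so the solution set is a graph over the cone and hence a $3$-dimensional manifold. Homotheties rescale $\sigma$ and leave $z$ fixed, so quotienting by them gives dimension $2$. The eight sign patterns give the eight disjoint components.
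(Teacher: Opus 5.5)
Your proof is correct, but it takes a different route from the paper in the middle steps. Both arguments start from the same three linear relations, obtained by subtracting conjugated equations. The paper then eliminates $\operatorname{Re}(z_j)$ by taking real parts and observing that the resulting $3\times3$ matrix has determinant $-2ABC\neq0$. It next solves a linear system for the products $y_1y_2=-D/A$, $y_1y_3=-D/B$, $y_2y_3=-D/C$. Finally, it derives the triangle inequalities from the factorization of $ABCD$ as $-(|\sigma_1|+|\sigma_2|+|\sigma_3|)$ times the three ``triangle'' factors, together with the observation that two of those factors cannot both be negative.

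That route gives necessity for all eight sign patterns at once, with no symmetry argument. However, the paper only asserts the converse as a ``direct substitution''. This tacitly needs $\operatorname{sign}(\sigma_1\sigma_2)=\operatorname{sign}(BC)$ and $\operatorname{sign}(\sigma_1\sigma_3)=\operatorname{sign}(AC)$. Both hold under the triangle inequalities, because $BC=(\sigma_1+\sigma_2)^2-\sigma_3^2$ and $AC=\sigma_2^2-(\sigma_1-\sigma_3)^2$; the paper verifies the first only later, in the remark on $\mathcal{W}_{\{2,3\}}$.

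Your elimination ($z_3=Az_1/C$ against $\overline{z_3}=-Az_1/C$) and your parametrization $t_2=Bt_1/C$, $t_3=At_1/C$ carry exactly these sign constraints along. So your positive-octant analysis proves both directions explicitly. The price is the reduction to $\sigma_j>0$, and the dimension count is then the same as the paper's.

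In that reduction there is one slip to fix. The map $\sigma_1\mapsto-\sigma_1$ exchanges $A\leftrightarrow B$ and $C\leftrightarrow D$, with no minus sign. For the other flips, $\sigma_2\mapsto-\sigma_2$ gives $A\leftrightarrow-C$, $B\leftrightarrow-D$, and $\sigma_3\mapsto-\sigma_3$ gives $A\leftrightarrow D$, $B\leftrightarrow C$. With $C\leftrightarrow-D$, the transformed constraint would read $\operatorname{sign}(BC)=-\operatorname{sign}(\sigma_1\sigma_2)$, so your check would not close.

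With the correct permutations the check does close. $ABCD$ is invariant, and $ABCD<0$ forces $\operatorname{sign}(AD)=-\operatorname{sign}(BC)$ and $\operatorname{sign}(BD)=-\operatorname{sign}(AC)$, which exactly compensates the change of $\operatorname{sign}(\sigma_j)$. Alternatively, you can bypass the symmetry step entirely: use the paper's factorization of $ABCD$ for necessity, and the two identities for $BC$ and $AC$ for sufficiency.
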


\begin{proof}
By Proposition~\ref{prop:A3-W4}, the $\mathcal{W}_4$-component of $\nabla\omega$ vanishes. Hence, $(z_j,\sigma_j)_{j=1}^3\in\mathcal{Z}$ belongs to $\mathcal{W}_{\{1,2\}}$ if and only if $(\nabla\omega)_3=0.$ By \eqref{eq:A3-W3-condition}, this is equivalent to
\begin{equation}\label{eq:auxiliary-A3-W12}
f_1=f_2=f_3=0,
\end{equation}
where
\begin{align*}
&f_1:=A\overline{z_1}z_2+B\overline{z_1}z_3+Cz_2z_3+D,\\
&f_2:=Az_1\overline{z_2}+Bz_1z_3+C\overline{z_2}z_3+D,\\
&f_3:=Az_1z_2+Bz_1\overline{z_3}+Cz_2\overline{z_3}+D.
\end{align*}
Assume that \eqref{eq:auxiliary-A3-W12} holds. Since $A,B,C,D\in\mathbb{R},$ we obtain
\begin{align*}
&0=\overline{f_1}-f_2=(\overline{z_3}-z_3)(Bz_1+C\overline{z_2}),\\
&0=\overline{f_1}-f_3=(\overline{z_2}-z_2)(Az_1+C\overline{z_3}),\\
&0=\overline{f_2}-f_3=(\overline{z_1}-z_1)(Az_2+B\overline{z_3}).
\end{align*}
Since $\sigma_j\operatorname{Im}(z_j)>0,$ we have $\overline{z_j}-z_j\neq0,$ $j=1,2,3.$ Therefore,
\begin{equation}\label{eq:A3-W12-linear-relations}
Bz_1+C\overline{z_2}=0,\ Az_1+C\overline{z_3}=0,\ Az_2+B\overline{z_3}=0.
\end{equation}
If $ABC=0,$ then the equations above and the fact that $z_j\neq0,$ $j=1,2,3,$ imply that $A=B=C=0,$ contradicting $B-A=2\sigma_1\neq0.$ Thus, $ABC\neq0.$ Writing $z_j=x_j+iy_j$ and taking real parts in \eqref{eq:A3-W12-linear-relations} yields
\[
\begin{pmatrix}
B&C&0\\
A&0&C\\
0&A&B
\end{pmatrix}
\begin{pmatrix}
x_1\\x_2\\x_3
\end{pmatrix}
=0.
\]
The determinant of the matrix above is $-2ABC\neq0$ and hence $x_1=x_2=x_3=0.$ Thus, $z_j=iy_j,$ $y_j\neq0$ and $\sigma_jy_j>0,$ $j=1,2,3.$ Substituting $z_j=iy_j$ into \eqref{eq:A3-W3-condition} gives
\begin{equation*}
\begin{cases}
Ay_1y_2+By_1y_3-Cy_2y_3+D=0,\\
Ay_1y_2-By_1y_3+Cy_2y_3+D=0,\\
-Ay_1y_2+By_1y_3+Cy_2y_3+D=0.
\end{cases}
\end{equation*}
Solving this system for $(y_1y_2,y_1y_3,y_2y_3),$ we obtain
\begin{equation}\label{eq:A3-W12-products}
y_1y_2=-\frac{D}{A},\ y_1y_3=-\frac{D}{B},\ y_2y_3=-\frac{D}{C}.
\end{equation}
Therefore,
\begin{equation}\label{eq:A3-W12-squares}
y_1^2=-\frac{CD}{AB},\ y_2^2=-\frac{BD}{AC},\ y_3^2=-\frac{AD}{BC},
\end{equation}
and, since $\sigma_jy_j>0,$ equation \eqref{eq:A3-W12-z} holds. Since $y_j^2>0,$ we have $ABCD<0.$ A direct computation gives
\begin{equation}\label{eq:A3-ABCD-factorization}
ABCD=-(|\sigma_1|+|\sigma_2|+|\sigma_3|)(-|\sigma_1|+|\sigma_2|+|\sigma_3|)
(|\sigma_1|-|\sigma_2|+|\sigma_3|)(|\sigma_1|+|\sigma_2|-|\sigma_3|).
\end{equation}
Hence,
\[
(-|\sigma_1|+|\sigma_2|+|\sigma_3|)
(|\sigma_1|-|\sigma_2|+|\sigma_3|)
(|\sigma_1|+|\sigma_2|-|\sigma_3|)>0.
\]
We claim that all three factors are positive. Otherwise, since their product is positive, exactly two of them would be negative. By symmetry, we may assume that
\[
-|\sigma_1|+|\sigma_2|+|\sigma_3|<0
\quad\text{and}\quad
|\sigma_1|-|\sigma_2|+|\sigma_3|<0.
\]
Adding these inequalities gives $2|\sigma_3|<0,$ a contradiction. Therefore, all three factors are positive and \eqref{eq:A3-W12-triangle} holds.\\

Conversely, suppose that \eqref{eq:A3-W12-triangle} and \eqref{eq:A3-W12-z} hold. By \eqref{eq:A3-ABCD-factorization}, we have $ABCD<0,$ so the square roots in \eqref{eq:A3-W12-z} are well defined and $\sigma_j\operatorname{Im}(z_j)>0,$ $j=1,2,3.$ A direct substitution of the expressions for $z_j$ into $f_1,f_2,f_3$ shows that $f_1=f_2=f_3=0.$ Hence, $(z_j,\sigma_j)_{j=1}^3$ belongs to $\mathcal{W}_{\{1,2\}}.$\\

Now observe that for each fixed choice of signs
\[
\left(\frac{\sigma_1}{|\sigma_1|},
\frac{\sigma_2}{|\sigma_2|},
\frac{\sigma_3}{|\sigma_3|}\right)\in\{1,-1\}^3,
\]
the corresponding structures are parametrized by the open convex cone
\[
\left\{
(|\sigma_1|,|\sigma_2|,|\sigma_3|)\in(\mathbb{R}^+)^3:
|\sigma_1|<|\sigma_2|+|\sigma_3|,\ 
|\sigma_2|<|\sigma_1|+|\sigma_3|,\ 
|\sigma_3|<|\sigma_1|+|\sigma_2|
\right\}.
\]
A homothety multiplies $(|\sigma_1|,|\sigma_2|,|\sigma_3|)$ by a positive factor. Thus, up to homothety, we may impose $|\sigma_1|+|\sigma_2|+|\sigma_3|=1.$ The resulting space is
\[
\left\{
(|\sigma_1|,|\sigma_2|,|\sigma_3|)\in(\mathbb{R}^+)^3:
|\sigma_1|+|\sigma_2|+|\sigma_3|=1,\ 
|\sigma_j|<\frac{1}{2},\ j=1,2,3
\right\},
\]
which is an open two-dimensional triangle. Since there are eight possible choices of signs, the space of these structures modulo homotheties is a disjoint union of eight manifolds, each of real dimension $2.$
\end{proof}

\begin{proposition}\label{prop:A3-W13}
An invariant almost Hermitian structure $(z_j,\sigma_j)_{j=1}^{3}$ on $\mathbb{F}$ belongs to the Gray-Hervella class $\mathcal{W}_{\{1,3\}}$ if and only if
\begin{equation}\label{eq:A3-W13-condition}
\begin{cases}
(\sigma_1-\sigma_2)z_3(z_1+z_2)=(\sigma_1+\sigma_2)(z_1z_2+1),\\[1mm]
(\sigma_2-\sigma_3)z_1(z_2+z_3)=-(\sigma_2+\sigma_3)(z_2z_3+1).
\end{cases}
\end{equation}
Up to homothety, the set of such structures is a disjoint union of eight manifolds, each of real dimension 4.
\end{proposition}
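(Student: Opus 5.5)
By Proposition~\ref{prop:A3-W4}, the $\mathcal{W}_4$-component of $\nabla\omega$ always vanishes. So a structure in $\mathcal{Z}$ belongs to $\mathcal{W}_{\{1,3\}}$ if and only if $(\nabla\omega)_2=0$. By \eqref{eq:A3-W2-condition}, this is the system
\[
\sigma_1P_1=\sigma_2P_2,\qquad \sigma_2P_2=\sigma_3P_3,
\]
where
\[
P_1=z_2z_3-z_1z_2+z_1z_3-1,\quad P_2=z_1z_2+z_1z_3+z_2z_3+1,\quad P_3=z_1z_2+z_1z_3-z_2z_3-1.
\]

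The plan is to rewrite this system in the form \eqref{eq:A3-W13-condition} by grouping terms; this step is pure algebra and reversible. For the first equation, $\sigma_1P_1-\sigma_2P_2=0$:
\begin{itemize}
\item collecting the $z_3$-terms gives $z_3(z_1+z_2)(\sigma_1-\sigma_2)$;
\item the remaining terms are $-\sigma_1(z_1z_2+1)-\sigma_2(z_1z_2+1)=-(\sigma_1+\sigma_2)(z_1z_2+1)$.
\end{itemize}
This is exactly the first line of \eqref{eq:A3-W13-condition}. For the second equation, $\sigma_2P_2-\sigma_3P_3=0$:
\begin{itemize}
\item collecting the $z_1$-terms gives $(\sigma_2-\sigma_3)z_1(z_2+z_3)$;
\item the remaining terms are $(\sigma_2+\sigma_3)(z_2z_3+1)$.
\end{itemize}
This yields the second line, with the stated sign. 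Hence the characterization holds, subject to the admissibility conditions \eqref{eq:A3-admissibility}.

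For the dimension count, fix the signs $\epsilon_j=\operatorname{sgn}\sigma_j$, giving eight open components. On each component, \eqref{eq:A3-W13-condition} consists of two complex, i.e.\ four real, equations in the $9$ real variables $(z_1,z_2,z_3,\sigma_1,\sigma_2,\sigma_3)$.

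I would solve explicitly rather than appeal to a generic transversality argument. Solve the first equation for $z_3$ and the second for $z_1$:
\[
z_3=\frac{(\sigma_1+\sigma_2)(z_1z_2+1)}{(\sigma_1-\sigma_2)(z_1+z_2)},\qquad
z_1=-\frac{(\sigma_2+\sigma_3)(z_2z_3+1)}{(\sigma_2-\sigma_3)(z_2+z_3)}.
\]
These are Möbius-type expressions. Substituting one into the other should let me express $(z_1,z_3)$ in terms of $z_2$ and $\sigma$, away from degenerate loci such as $\sigma_1=\sigma_2$. The admissibility inequalities are open conditions, so the solution set should be a smooth graph of dimension $9-4=5$ over an open set in the $(z_2,\sigma)$-variables. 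Homothety scales all $\sigma_j$ by the same positive factor and leaves the $z_j$ unchanged. So we may normalize $|\sigma_1|+|\sigma_2|+|\sigma_3|=1$, leaving dimension $4$ on each of the eight sign components.

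The main obstacle is this smoothness and dimension claim. I expect to verify it via the implicit function theorem by computing the real rank of the Jacobian of
\[
F=\bigl(\sigma_1P_1-\sigma_2P_2,\ \sigma_2P_2-\sigma_3P_3\bigr)
\]
with respect to $(z_1,z_3)$. Its complex derivative matrix is
\[
\begin{pmatrix}
(\sigma_1-\sigma_2)z_3-(\sigma_1+\sigma_2)z_2 & (\sigma_1-\sigma_2)(z_1+z_2)\\
(\sigma_2-\sigma_3)(z_2+z_3) & (\sigma_2-\sigma_3)z_1+(\sigma_2+\sigma_3)z_2
\end{pmatrix}.
\]
Where this matrix is singular, I would instead differentiate with respect to $z_2$ or the $\sigma_j$, using that $F$ is holomorphic in the $z_j$. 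Showing that the full $4\times 9$ real Jacobian has rank $4$ at every solution, in particular when some $\sigma_i=\sigma_j$, is the delicate point. Nonemptiness of each component would follow by exhibiting one explicit solution for each sign pattern, for instance a purely imaginary $z_j$ with suitable $\sigma$.
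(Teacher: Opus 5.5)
\textbf{Where the proposal stands.} Your rewriting of \eqref{eq:A3-W2-condition} as \eqref{eq:A3-W13-condition} is correct and is exactly the paper's first step. The gap is in the second half. You never prove that the solution set is a $5$-dimensional manifold. The rank computation you yourself call ``the delicate point'' is left undone. Nonemptiness of the eight pieces is only promised, and the homothety quotient is asserted without the invariance and transversality it needs. The concrete description you propose is also wrong as stated. Both equations are linear in $z_3$, and eliminating $z_3$ gives
\[
\sigma_1(\sigma_2-\sigma_3)z_2\,z_1^2+\sigma_2\bigl((\sigma_1+\sigma_3)z_2^2+\sigma_1-\sigma_3\bigr)z_1+\sigma_1(\sigma_2+\sigma_3)z_2=0,
\]
which is a genuine quadratic in $z_1$. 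So the solution set is not a single-valued graph over the $(z_2,\sigma)$-variables. Its projection to $(z_2,\sigma)$ is at best a local diffeomorphism, and it fails to be one exactly where the $(z_1,z_3)$-block you wrote is singular. The dimension claim therefore rests entirely on the unverified rank statement.

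\textbf{The missing idea.} $P_1,P_2,P_3$ depend on the $z_j$ only through $p=z_1z_2$, $q=z_1z_3$, $r=z_2z_3$, and they do so affinely. The paper uses this by writing the common value $\tau=\sigma_jP_j$ and solving the resulting linear system for $(p,q,r)$ in terms of $(\tau,\sigma)$. It then passes to the variables $(\tau,\eta,\sigma)$ with $\eta=z_1z_2z_3$ and $\eta^2=pqr$. Regularity of $\eta^2-pqr$ is then immediate in the $\eta$-direction, because $\eta\neq0$.

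\textbf{How your route closes.} Your implicit-function approach also works with the same observation. The complex Jacobian of $(\sigma_1P_1-\sigma_2P_2,\ \sigma_2P_2-\sigma_3P_3)$ with respect to $(z_1,z_2,z_3)$ factors as $MK$. Here $K=\partial(p,q,r)/\partial(z_1,z_2,z_3)$ has determinant $-2z_1z_2z_3\neq0$ on $\mathcal{Z}$, and
\[
M=\begin{pmatrix}-(\sigma_1+\sigma_2)&\sigma_1-\sigma_2&\sigma_1-\sigma_2\\ \sigma_2-\sigma_3&\sigma_2-\sigma_3&\sigma_2+\sigma_3\end{pmatrix}.
\]
The $2\times2$ minors of $M$ are $-2\sigma_1(\sigma_2-\sigma_3)$, $-2\sigma_2(\sigma_1+\sigma_3)$ and $2\sigma_3(\sigma_1-\sigma_2)$. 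If all three vanished, then $\sigma_1=\sigma_2=\sigma_3=-\sigma_1$, which is impossible since $\sigma_1\neq0$.

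This has three consequences:
\begin{enumerate}
\item The $z$-derivatives alone have real rank $4$ at every point of $\mathcal{Z}$.
\item No case analysis on $\sigma_i=\sigma_j$ is needed.
\item The solution set is a regular level set of real dimension $5$.
\end{enumerate}

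\textbf{Remaining steps.} For nonemptiness, take $z_j=i\epsilon_j$ and $\sigma_j=\epsilon_j$; this satisfies \eqref{eq:A3-W13-condition} for every $\epsilon\in\{1,-1\}^3$, and these are precisely the points the paper exhibits. For the quotient, note that the equations are homogeneous of degree $1$ in $\sigma$ with $z$ fixed, so the solution set is invariant under homotheties. The homothety direction is transverse to the level set $|\sigma_1|+|\sigma_2|+|\sigma_3|=1$. Only then does the normalization give dimension $4$ on each sign component.
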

\begin{proof}
By Proposition~\ref{prop:A3-W4}, $(\nabla\omega)_4=0$ for every $(z_j,\sigma_j)_{j=1}^{3}.$ Therefore, $(z_j,\sigma_j)_{j=1}^{3}$ belongs to $\mathcal{W}_{\{1,3\}}$ if and only if \eqref{eq:A3-W2-condition} holds. A direct computation shows that \eqref{eq:A3-W13-condition} and \eqref{eq:A3-W2-condition} are equivalent. Assume that $(z_j,\sigma_j)_{j=1}^{3}$ satisfies \eqref{eq:A3-W13-condition} and set 
\begin{align*}
    &\tau:=\sigma_1(z_2z_3-z_1z_2+z_1z_3-1)\\
    &=\sigma_2(z_1z_2+z_1z_3+z_2z_3+1)\\
    &=\sigma_3(z_1z_2+z_1z_3-z_2z_3-1),\\
    &p:=z_1z_2,\ q:=z_1z_3,\ r:=z_2z_3,\ \eta:=z_1z_2z_3.
\end{align*}
Then $\eta\neq0$ and
\[
\eta^2=pqr,\ z_1=\frac{pq}{\eta},\ z_2=\frac{pr}{\eta},\ z_3=\frac{qr}{\eta}.
\]
Moreover,
\[
\begin{cases}
    \displaystyle -p+q+r-1=\tau/\sigma_1,\\[2mm]
    \displaystyle p+q+r+1=\tau/\sigma_2,\\[2mm]
    \displaystyle p+q-r-1=\tau/\sigma_3.
\end{cases}
\]
Solving this system, we obtain
\begin{align*}
    p(\tau,\sigma)&=-1+\frac{\tau}{2}\left(\frac{1}{\sigma_2}-\frac{1}{\sigma_1}\right),\\
    q(\tau,\sigma)&=1+\frac{\tau}{2}\left(\frac{1}{\sigma_1}+\frac{1}{\sigma_3}\right),\\
    r(\tau,\sigma)&=-1+\frac{\tau}{2}\left(\frac{1}{\sigma_2}-\frac{1}{\sigma_3}\right),
\end{align*}
where $\sigma:=(\sigma_j)_{j=1}^3.$ Since $\sigma_j\operatorname{Im}(z_j)>0,$ $j=1,2,3,$ we have
\begin{equation}\label{eq:auxiliary-positive-condition}
    \begin{aligned}
    \sigma_1\operatorname{Im}
    \left(\frac{p(\tau,\sigma)q(\tau,\sigma)}{\eta}\right),\ \sigma_2\operatorname{Im}
    \left(\frac{p(\tau,\sigma)r(\tau,\sigma)}{\eta}\right),\ \sigma_3\operatorname{Im}
    \left(\frac{q(\tau,\sigma)r(\tau,\sigma)}{\eta}\right)>0.
    \end{aligned}
\end{equation}
Thus, the assignment $(z_j,\sigma_j)_{j=1}^{3}\longmapsto\left(\tau,\eta,\sigma\right)$ takes the solution set into
    \[
    \mathcal{S}=\left\{
    \left(\tau,\eta,\sigma\right)\in\mathbb{C}\times\mathbb{C}^*\times(\mathbb{R}^*)^3:
    \eta^2=p(\tau,\sigma)q(\tau,\sigma)r(\tau,\sigma)
    \ \text{and \eqref{eq:auxiliary-positive-condition} holds}
    \right\}.
    \]
    
Conversely, each $\left(\tau,\eta,\sigma\right)\in\mathcal{S}$ determines, via 
\[
z_1=\frac{p(\tau,\sigma)q(\tau,\sigma)}{\eta},\ z_2=\frac{p(\tau,\sigma)r(\tau,\sigma)}{\eta},\ z_3=\frac{q(\tau,\sigma)r(\tau,\sigma)}{\eta},
\]
an invariant almost Hermitian structure $(z_j,\sigma_j)_{j=1}^{3}$ satisfying
\eqref{eq:A3-W13-condition}. Therefore, the two assignments above are mutually inverse and define a diffeomorphism between the solution set and $\mathcal{S}$.\\

Now, consider the map $F:\mathbb{C}\times\mathbb{C}^*\times(\mathbb{R}^*)^3
\longrightarrow\mathbb{C}$ defined by
\[
F\left(\tau,\eta,\sigma\right):=\eta^2-p(\tau,\sigma)q(\tau,\sigma)r(\tau,\sigma).
\]
For every $v\in\mathbb{C},$ we have $dF_{\left(\tau,\eta,\sigma\right)}(0,v,0)=2\eta v.$ Since $\eta\neq0,$ this is a surjective real linear map from
$\mathbb{C}$ to $\mathbb{C}.$ Hence, $0$ is a regular value of $F$ and $F^{-1}(0)$ is a smooth submanifold of real codimension $2.$ Since \eqref{eq:auxiliary-positive-condition} consists of strict inequalities, $\mathcal{S}$ is an open subset of $F^{-1}(0).$ Therefore, $\mathcal{S}$ is a smooth manifold of real dimension $5.$ For each
$\epsilon=(\epsilon_j)_{j=1}^3\in\{1,-1\}^3,$ let
\[
\mathcal{S}_{\epsilon}:=\left\{\left(\tau,\eta,\sigma\right)\in\mathcal{S}:\frac{\sigma_j}{|\sigma_j|}=\epsilon_j,\ j=1,2,3\right\}.
\]
Then
\[
\mathcal{S}=\bigsqcup_{\epsilon\in\{1,-1\}^3}\mathcal{S}_{\epsilon},
\]
and each $\mathcal{S}_{\epsilon}$ is both open and closed in $\mathcal{S}.$ Moreover, each of these sets is nonempty since 
\[
(-\epsilon_1\epsilon_2\epsilon_3+\epsilon_2-\epsilon_3-\epsilon_1,-i\epsilon_1\epsilon_2\epsilon_3,(\epsilon_j)_{j=1}^3)\in\mathcal{S}_\epsilon,
\]
for every $\epsilon\in\{1,-1\}^3.$ Consequently, each $\mathcal{S}_\epsilon$ is a smooth manifold of real dimension 5.\\

Finally, a homothety by a factor $\lambda>0$ acts by
\[
\lambda\cdot\left(\tau,\eta,\sigma\right)=\left(\lambda\tau,\eta,(\lambda\sigma_j)_{j=1}^{3}\right).
\]
This action preserves each $\mathcal{S}_{\epsilon}$ and every orbit intersects the level set $|\sigma_1|+|\sigma_2|+|\sigma_3|=1$ at exactly one point, obtained by taking
\[
\lambda=\frac{1}{|\sigma_1|+|\sigma_2|+|\sigma_3|}.
\]
Moreover, the homothety orbits intersect this level set transversely. In fact, along the curve
\[
\lambda\longmapsto
\left(\lambda\tau,\eta,(\lambda\sigma_j)_{j=1}^{3}\right),
\]
we have
\[
\left.
\frac{d}{d\lambda}
\left(
|\lambda\sigma_1|+|\lambda\sigma_2|+|\lambda\sigma_3|
\right)
\right|_{\lambda=1}
=
|\sigma_1|+|\sigma_2|+|\sigma_3|>0.
\]
Thus, this level set is a smooth submanifold of real codimension $1$ in $\mathcal{S}_{\epsilon}$ and is diffeomorphic to the quotient of $\mathcal{S}_{\epsilon}$ by homotheties. Therefore, this quotient has real dimension $4.$ Hence, up to homothety, the solution set is a disjoint union of eight smooth manifolds, each of real dimension $4.$
\end{proof}
\begin{remark}
    The proof of Proposition~\ref{prop:A3-W13} provides a parametrization of the invariant almost Hermitian structures that belong to $\mathcal{W}_{\{1,3\}}$ as follows: let
    \[
    D:=\left\{(\tau,\sigma)\in\mathbb{C}\times(\mathbb{R}^*)^3:\begin{array}{l}\text{there exists }\eta\in\mathbb{C}^*\text{ such that }\eta^2=p(\tau,\sigma)q(\tau,\sigma)r(\tau,\sigma)\\
    \text{and \eqref{eq:auxiliary-positive-condition} holds}
    \end{array}\right\}.
    \]
    For each $(\tau,\sigma)\in D,$ the square root satisfying \eqref{eq:auxiliary-positive-condition} is unique; we denote it by $\eta(\tau,\sigma).$ Moreover, $D$ is open and $\eta:D\to\mathbb{C}^*$ is smooth. Thus, the map
    \[
    (\tau,\sigma)\longmapsto\left(\frac{p(\tau,\sigma)q(\tau,\sigma)}{\eta(\tau,\sigma)},\frac{p(\tau,\sigma)r(\tau,\sigma)}{\eta(\tau,\sigma)},\frac{q(\tau,\sigma)r(\tau,\sigma)}{\eta(\tau,\sigma)},\sigma_1,\sigma_2,\sigma_3\right)
    \]
    is a global smooth parametrization of the set of invariant almost Hermitian structures belonging to $\mathcal{W}_{\{1,3\}}.$
\end{remark}

\begin{proposition}\label{prop:A3-W1}
An invariant almost Hermitian structure $(z_j,\sigma_j)_{j=1}^{3}$ on $\mathbb{F}$ belongs to the Gray-Hervella class $\mathcal{W}_1$ if and only if $(z_j,\sigma_j)_{j=1}^{3}$ is given by one of the rows of Table~\ref{tab:A3-W1}, where $\lambda>0$ and $\delta\in\{1,-1\}.$\\

\noindent
\begin{minipage}{\linewidth}
\centering
\renewcommand{\arraystretch}{1.5}
\setlength{\cellspacetoplimit}{6pt}
\setlength{\cellspacebottomlimit}{6pt}
\setlength{\tabcolsep}{12pt}
\begin{tabular}{|Sc|Sc|}
\hline
$(z_1,z_2,z_3)$
&
$(\sigma_1,\sigma_2,\sigma_3)$
\\
\hline
$\displaystyle\frac{i\delta}{\sqrt{3}}(1,3,1)$
&
$\delta\lambda(1,1,1)$
\\
\hline
$\displaystyle\frac{i\delta}{\sqrt{3}}(3,1,-1)$
&
$\delta\lambda(1,1,-1)$
\\
\hline
$i\delta\sqrt{3}(1,-1,1)$
&
$\delta\lambda(1,-1,1)$
\\
\hline
$\displaystyle\frac{i\delta}{\sqrt{3}}(1,-1,-3)$
&
$\delta\lambda(1,-1,-1)$
\\
\hline
\end{tabular}
\captionof{table}{Invariant nearly K\"ahler structures on the maximal real flag manifold of type $\mathcal{A}_3$.}
\label{tab:A3-W1}
\end{minipage}\\[0.5em]

Consequently, up to homothety, $\mathbb{F}$ admits exactly 
eight invariant nearly K\"ahler structures, all of which are strict.
\end{proposition}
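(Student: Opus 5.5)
By Proposition~\ref{prop:A3-W4}, a structure lies in $\mathcal{W}_1$ if and only if it lies in both $\mathcal{W}_{\{1,2\}}$ and $\mathcal{W}_{\{1,3\}}$. So the plan is to intersect the two characterizations already obtained. Proposition~\ref{prop:A3-W12} forces $z_j=iy_j$, with the $y_j$ determined by $\sigma$ through \eqref{eq:A3-W12-products}, namely
\[
y_1y_2=-D/A,\quad y_1y_3=-D/B,\quad y_2y_3=-D/C,
\]
together with the triangle inequalities \eqref{eq:A3-W12-triangle}. It then remains to impose the $\mathcal{W}_2$-vanishing condition \eqref{eq:A3-W2-condition}, which is equivalent to \eqref{eq:A3-W13-condition}.

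First I substitute $z_j=iy_j$ into \eqref{eq:A3-W2-condition}. Writing $p=z_1z_2=-y_1y_2=D/A$, $q=D/B$ and $r=D/C$, all three quantities are real. The conditions become
\[
\sigma_1(-p+q+r-1)=\sigma_2(p+q+r+1)=\sigma_3(p+q-r-1).
\]
These are two real polynomial equations in $\sigma$ after clearing the denominators $ABC$. Both sides are homogeneous of degree $1$, so I normalize by homothety, for instance taking $\sigma_1=\delta$ with $\delta=\pm1$. The sign symmetry $(g,J)\mapsto(g,-J)$ sends $(z,\sigma)\mapsto(-z,-\sigma)$; equivalently it flips $\delta$, which accounts for the factor $\delta$ in the table. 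This reduces the problem to two equations in the two unknowns $s=\sigma_2/\sigma_1$ and $t=\sigma_3/\sigma_1$. I then split into the four cases given by the signs of $s$ and $t$, which correspond to the four rows of Table~\ref{tab:A3-W1}.

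In each sign case, the triangle inequalities confine $(|s|,|t|)$ to an open triangle-type region. I expect the system in that region to have the unique solution $|s|=|t|=1$. For example, with $\sigma=(1,1,1)$ one gets $A=1$, $B=3$, $C=1$, $D=-1$, hence $p=-1$, $q=-1/3$, $r=-1$. Then $-p+q+r-1=-4/3$, $p+q+r+1=-4/3$ and $p+q-r-1=-4/3$, so the equations hold. Moreover $y_1^2=1/3$, $y_2^2=3$ and $y_3^2=1/3$, which matches the first row. The other rows are checked the same way.

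The main obstacle is proving uniqueness: showing that these are the only solutions of the two rational equations in $(s,t)$ inside the triangle region. My first attempt would be to multiply out by $ABC$ and factor the resulting polynomials in $(s,t)$. Symmetry under permuting and sign-changing the $\sigma_j$ (which permutes $A,B,C,D$ up to sign) should make the factorization tractable. I would then discard any factors that vanish outside the admissible region, for instance on the triangle boundary or where $ABCD\ge0$. If that fails, I would compute a resultant to eliminate $t$ and examine the roots in $s$ directly.

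Finally, to see that the structures are strict nearly Kähler, it suffices to observe that $(\nabla\omega)_1\neq0$, since otherwise the structure would be Kähler, which Proposition~\ref{prop:A3-empty-classes} excludes. Counting four rows and two values of $\delta$, up to homothety, gives the eight structures.
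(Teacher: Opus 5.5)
Your reduction is the same as the paper's. You intersect $\mathcal{W}_{\{1,2\}}$ and $\mathcal{W}_{\{1,3\}}$, use $z_j=iy_j$ with $z_1z_2=D/A$, $z_1z_3=D/B$, $z_2z_3=D/C$, and impose the two remaining equations. The gap is the decisive step, showing that these equations force $|\sigma_1|=|\sigma_2|=|\sigma_3|$. You only say you expect it, yet it is the entire content of the ``only if'' direction.

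It needs no resultant. Multiply your first equation by $ABC\neq0$ (nonzero since $ABCD<0$). It becomes $\sigma_1\bigl(AD(B+C)-BC(A+D)\bigr)=\sigma_2\bigl(AD(B+C)+BC(A+D)\bigr)$. Now use $B+C=2(\sigma_1+\sigma_2)$, $A+D=2(\sigma_2-\sigma_1)$, $AD=(\sigma_2-\sigma_1)^2-\sigma_3^2$ and $BC=(\sigma_1+\sigma_2)^2-\sigma_3^2$. The first equation collapses to the first identity below, and the second equation, treated the same way, gives the second:
\[
(\sigma_1^2-\sigma_2^2)(\sigma_1^2+\sigma_2^2-\sigma_3^2)=0,\qquad (\sigma_2^2-\sigma_3^2)(\sigma_1^2-\sigma_2^2-\sigma_3^2)=0.
\]

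Your proposed way of pruning spurious factors would also fail here. You suggest discarding factors that vanish only on the triangle boundary or where $ABCD\geq0$. But the right-triangle locus $\sigma_3^2=\sigma_1^2+\sigma_2^2$ lies inside the admissible region. For example, $(|\sigma_1|,|\sigma_2|,|\sigma_3|)=(3,4,5)$ satisfies the strict triangle inequalities, hence $ABCD<0$, and it solves the first equation.

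These factors must instead be removed by playing the two equations against each other:
\begin{itemize}
\item If $\sigma_1^2=\sigma_2^2$, the alternative $\sigma_1^2=\sigma_2^2+\sigma_3^2$ would force $\sigma_3=0$, so $\sigma_2^2=\sigma_3^2$.
\item If $\sigma_3^2=\sigma_1^2+\sigma_2^2$, the two alternatives of the second equation force $\sigma_1=0$ or $\sigma_2=0$.
\end{itemize}
This is exactly the paper's argument. After it, substituting into \eqref{eq:A3-W12-z} yields the table.

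A minor point: $(g,-J)$ acts by $(z,\sigma)\mapsto(\overline{z},-\sigma)$, because $z_j=(b_j+i\sigma_j)/\mu_j$. This agrees with your $(-z,-\sigma)$ only because the $z_j$ are purely imaginary here. Your treatment of strictness and the count of eight structures is fine.
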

\begin{proof}
By Proposition~\ref{prop:A3-W4}, the $\mathcal{W}_4$-component of
$\nabla\omega$ vanishes. Hence, an invariant almost Hermitian
structure belongs to $\mathcal{W}_1$ if and only if it belongs to both
$\mathcal{W}_{\{1,2\}}$ and $\mathcal{W}_{\{1,3\}}.$ Suppose first that $(z_j,\sigma_j)_{j=1}^3$ belongs to $\mathcal{W}_{\{1,2\}}$ and $\mathcal{W}_{\{1,3\}}.$ By the proof of
Proposition~\ref{prop:A3-W12} we have that $ABCD<0$ and $z_j=iy_j,\ j=1,2,3,$ where $y_1,y_2,y_3$ satisfy \eqref{eq:A3-W12-products}. Consequently, 
\begin{equation*}\label{eq:A3-W1-products}
z_1z_2=\frac{D}{A},\ z_1z_3=\frac{D}{B},\ z_2z_3=\frac{D}{C}.
\end{equation*}
Substituting these expressions into the first equation of \eqref{eq:A3-W13-condition}, we obtain
\begin{align*}
    &\,(\sigma_1-\sigma_2)\left(\frac{D}{B}+\frac{D}{C}\right)=(\sigma_1+\sigma_2)\left(\frac{D}{A}+1\right)\\
    \Longleftrightarrow&\,\sigma_1\left(ACD+ABD-BCD-ABC\right)-\sigma_2\left(ACD+ABD+BCD+ABC\right)=0\\
    \Longleftrightarrow&\,4\sigma_1^2(\sigma_1^2-\sigma_2^2-\sigma_3^2)-4\sigma_2^2(\sigma_2^2-\sigma_1^2-\sigma_3^2)=0\\
    \Longleftrightarrow&\,(\sigma_1^2-\sigma_2^2)(\sigma_1^2+\sigma_2^2-\sigma_3^2)=0.
\end{align*}
Analogously, the second equation of \eqref{eq:A3-W13-condition} reduces to
\[
(\sigma_2^2-\sigma_3^2)(\sigma_1^2-\sigma_2^2-\sigma_3^2)=0.
\]
Therefore,
\begin{equation}\label{eq:A3-W1-sigma-system}
\begin{cases}
(\sigma_1^2-\sigma_2^2)(\sigma_1^2+\sigma_2^2-\sigma_3^2)=0,\\
(\sigma_2^2-\sigma_3^2)
(\sigma_1^2-\sigma_2^2-\sigma_3^2)=0.
\end{cases}
\end{equation}

We claim that $|\sigma_1|=|\sigma_2|=|\sigma_3|.$ In fact, if $\sigma_1^2=\sigma_2^2,$ then the second equation in \eqref{eq:A3-W1-sigma-system} implies either $\sigma_2^2=\sigma_3^2$ or $\sigma_1^2=\sigma_2^2+\sigma_3^2.$ The latter is impossible because $\sigma_1^2=\sigma_2^2$ and $\sigma_3\neq0.$ Hence, $\sigma_2^2=\sigma_3^2.$ On the other hand, suppose that $\sigma_3^2=\sigma_1^2+\sigma_2^2.$ The second equation in \eqref{eq:A3-W1-sigma-system} implies either $\sigma_2^2=\sigma_3^2,$ which would give $\sigma_1=0,$ or $\sigma_1^2=\sigma_2^2+\sigma_3^2,$ which would give $\sigma_2=0.$ Both alternatives are impossible. This proves the claim. Consequently, there exist $\lambda>0$ and $\delta\in\{1,-1\}$ such that $(\sigma_1,\sigma_2,\sigma_3)$ is one of the following triples:
\[
\delta\lambda(1,1,1),\ \delta\lambda(1,1,-1),\ \delta\lambda(1,-1,1),\ \delta\lambda(1,-1,-1).
\]
Substituting these four possibilities into \eqref{eq:A3-W12-z}, we obtain exactly the corresponding values of $(z_1,z_2,z_3)$ displayed in Table~\ref{tab:A3-W1}.\\

Conversely, consider any of the structures of Table~\ref{tab:A3-W1}. Since $|\sigma_1|=|\sigma_2|=|\sigma_3|=\lambda,$ the inequalities \eqref{eq:A3-W12-triangle} hold. Moreover, the corresponding values of $(z_1,z_2,z_3)$ satisfy
\eqref{eq:A3-W12-z}. Thus, the structure belongs to $\mathcal{W}_{\{1,2\}}.$ A direct substitution into \eqref{eq:A3-W13-condition} shows that it also belongs to $\mathcal{W}_{\{1,3\}}.$ Therefore, it belongs to $\mathcal{W}_1.$\\

Finally, up to homothety, we may impose $\lambda=1.$ Since the table contains four rows and $\delta$ has two possible values, there are exactly eight invariant nearly K\"ahler structures up to homothety. By Proposition~\ref{prop:A3-empty-classes}, none of them is K\"ahler. Hence, all of them are strict.
\end{proof}

\begin{remark}
The eight structures in Table~\ref{tab:A3-W1} are equivalent by diffeomorphisms of $\mathbb{F}$ in the sense that, for $\lambda=1$ and any two almost Hermitian structures $(g_j,J_j),\ j=1,2,$ in Table~\ref{tab:A3-W1}, there exists a diffeomorphism $f:\mathbb{F}\to\mathbb{F}$ such that $f^*g_1=g_2$ and $f^*J_1=J_2.$ To see this, write $H:=\mathrm{S}\left(\mathrm{O}(1)\times\mathrm{O}(1)\times\mathrm{O}(1)\times\mathrm{O}(1)\right).$ Given a permutation $\gamma$ of $\{1,2,3,4\},$ let $P_\gamma$ be the $4\times 4$ matrix obtained by applying $\gamma$ to the rows of the identity matrix. Consider the group $D_4=\langle P_{(1234)},P_{(13)}\rangle.$ Since every $P_\gamma\in D_4$ is orthogonal and normalizes $H,$ conjugation by $P_\gamma$ induces a diffeomorphism
\[
C_{P_\gamma}:\mathbb{F}\longrightarrow\mathbb{F},\ C_{P_\gamma}(xH)=P_{\gamma}xP_{\gamma}^{-1}H,\ (x\in\mathrm{SO}(4)).
\]
Let $(g_{r,\delta},J_{r,\delta})$ denote the structure determined by the $r$-th row of Table~\ref{tab:A3-W1}, with $\lambda=1$ and $\delta\in\{1,-1\}.$ Then, one verifies that
\[
C_{P_\gamma}^*g_{1,1}=g_{r,\delta},\ C_{P_{\gamma}}^*J_{1,1}=J_{r,\delta}
\]
for the following choices:
\[
\begin{array}{|c|c|c|c|c|c|c|c|c|}
\hline(r,\delta)&(1,1)&(1,-1)&(2,1)&(2,-1)&(3,1)&(3,-1)&(4,1)&(4,-1)\\\hline
\gamma&\operatorname{Id}&(13)&(1432)&(14)(23)&(24)&(13)(24)&(12)(34)&(1234)\\\hline
\end{array}
\]
Thus, the eight structures are equivalent.\\

This is consistent with Butruille's classification of simply connected six-dimensional homogeneous strict nearly K\"ahler manifolds in \cite[Theorem~1.2]{B}. In fact, let $\rho:\operatorname{Spin}(4)\longrightarrow\operatorname{SO}(4)$ be the double covering. Since $H$ is discrete, the map $\operatorname{Spin}(4)\ni a\longmapsto\rho(a)H\in\mathbb{F}$ is the universal covering of $\mathbb{F}.$ Therefore,
\[
\tilde{\mathbb{F}}\simeq\operatorname{Spin}(4)\simeq\operatorname{SU}(2)\times\operatorname{SU}(2)\simeq S^3\times S^3
\]
and the unique (up to equivalence) structure in Table~\ref{tab:A3-W1} lifts to the left-invariant strict nearly K\"ahler structure on $S^3\times S^3$ described in \cite[Proposition~3.1]{B}.
\end{remark}

\begin{proposition}
    An invariant almost Hermitian structure $(z_j,\sigma_j)_{j=1}^{3}$ on $\mathbb{F}$ belongs to the Gray-Hervella class $\mathcal{W}_{\{2,3\}}$ if and only if
    \begin{equation}\label{eq:A3-W23-condition}
    Az_1z_2+Bz_1z_3+Cz_2z_3+D=0.
    \end{equation}
    Up to homothety, the set of such structures is a disjoint union of eight manifolds, each of real dimension $6.$
\end{proposition}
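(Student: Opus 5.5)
By Proposition~\ref{prop:A3-W4}, the $\mathcal{W}_4$-component of $\nabla\omega$ vanishes for every invariant structure. Hence $(z_j,\sigma_j)_{j=1}^3$ belongs to $\mathcal{W}_{\{2,3\}}$ if and only if $(\nabla\omega)_1=0$. By \eqref{eq:A3-W1-condition}, this is exactly \eqref{eq:A3-W23-condition}, which gives the characterization. It remains to describe the solution set.

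Set $G(z,\sigma):=Az_1z_2+Bz_1z_3+Cz_2z_3+D$, viewed as a smooth map $\mathcal{Z}\to\mathbb{C}\cong\mathbb{R}^2$. Recall that $\mathcal{Z}$ is an open subset of $\mathbb{C}^3\times(\mathbb{R}^*)^3$ of real dimension $9$. I will show that $0$ is a regular value of $G$ on the solution set, using only the holomorphic derivatives in the $z$-variables. We have
\[
\partial_{z_1}G=Az_2+Bz_3,\quad \partial_{z_2}G=Az_1+Cz_3,\quad \partial_{z_3}G=Bz_1+Cz_2 .
\]
If at least one of these is nonzero, then the real differential of $G$ in that $z_j$-direction is multiplication by a nonzero complex number, hence surjective onto $\mathbb{C}$. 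So the main obstacle is to exclude points where $G=0$ and all three partials vanish simultaneously.

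To exclude such points, suppose all three partials vanish. Multiply them by $z_1,z_2,z_3$ respectively and combine. With $p=z_1z_2$, $q=z_1z_3$, $r=z_2z_3$, this gives $Ap+Bq=0$, $Ap+Cr=0$ and $Bq+Cr=0$, hence $Ap=Bq=Cr=0$. Then $G=0$ forces $D=0$. Since every $z_j\neq 0$, we also have $p,q,r\neq0$, so $A=B=C=D=0$. This contradicts $B-A=2\sigma_1\neq0$.

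Therefore the solution set $G^{-1}(0)\cap\mathcal{Z}$ is a smooth submanifold of real dimension $9-2=7$. Intersecting with the sign conditions on $\sigma_j$ decomposes it into eight open-and-closed pieces $\mathcal{S}_\epsilon$, one for each $\epsilon\in\{1,-1\}^3$.

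Next I will show that each $\mathcal{S}_\epsilon$ is nonempty. The nearly K\"ahler structures of Table~\ref{tab:A3-W1} satisfy \eqref{eq:A3-W23-condition}, and both signs $\delta$ occur in each of the four rows. Together these realize all eight sign patterns $(\sigma_1/|\sigma_1|,\sigma_2/|\sigma_2|,\sigma_3/|\sigma_3|)$.

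Finally, a homothety by $\lambda>0$ sends $\sigma\mapsto\lambda\sigma$ and fixes $z$, by \eqref{eq:P-to-Z}. Since $A,B,C,D$ are linear in $\sigma$, this action preserves each $\mathcal{S}_\epsilon$. Exactly as in the proof of Proposition~\ref{prop:A3-W13}, the slice $|\sigma_1|+|\sigma_2|+|\sigma_3|=1$ meets each orbit exactly once and transversally. It is therefore a codimension-one submanifold of $\mathcal{S}_\epsilon$ diffeomorphic to the quotient by homotheties. This yields eight manifolds, each of real dimension $6$.
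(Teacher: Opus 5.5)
Your characterization of the class, the splitting into the $\mathcal{S}_\epsilon$ and the homothety slice all follow the paper's route. Your regularity argument is also fine: multiplying the partials by $z_1,z_2,z_3$ gives $Ap=Bq=Cr=0$ and hence $A=B=C=0$. This is a valid substitute for the paper's appeal to the $2ABC$-determinant argument of Proposition~\ref{prop:A3-W12}, and the hypothesis $G=0$ is not even needed there.

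\textbf{The gap is the nonemptiness step.} The structures of Table~\ref{tab:A3-W1} do \emph{not} satisfy \eqref{eq:A3-W23-condition}. They lie in $\mathcal{W}_1$ and are strict, so $(\nabla\omega)_1\neq0$. Indeed, a structure in both $\mathcal{W}_1$ and $\mathcal{W}_{\{2,3\}}$ would lie in $\mathcal{W}_\emptyset$, i.e.\ be K\"ahler, which Proposition~\ref{prop:A3-empty-classes} excludes. Concretely, take the first row with $\delta=\lambda=1$. Then $A=1$, $B=3$, $C=1$, $D=-1$ and $z_1z_2=-1$, $z_1z_3=-1/3$, $z_2z_3=-1$. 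Hence $Az_1z_2+Bz_1z_3+Cz_2z_3+D=-4\neq0$.

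So your witnesses are precisely points outside $G^{-1}(0)$. As written, you have not shown that all eight $\mathcal{S}_\epsilon$ are nonempty, and so the count ``eight'' is unjustified.

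To repair this, exhibit an explicit point of $G^{-1}(0)$ in each sign class. The paper uses $(z_1,z_2,z_3)=(i\epsilon_1,\ i\epsilon_2 3^{\epsilon_1\epsilon_3},\ i\epsilon_3 3^{-\epsilon_1\epsilon_2})$ together with $(\sigma_1,\sigma_2,\sigma_3)=(10\epsilon_1,\epsilon_2,\epsilon_3)$. For $\epsilon=(1,1,1)$ this gives $A=-8$, $B=12$, $C=10$, $D=-10$, and then $24-4-10-10=0$.

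Replacing $\epsilon$ by $-\epsilon$ changes $z_j\mapsto -z_j$ and $(A,B,C,D)\mapsto-(A,B,C,D)$, so $G\mapsto -G$. It therefore suffices to check the four cases with $\epsilon_1=1$, and each is a one-line computation. With this replacement your proof is complete.
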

\begin{proof}
    By Proposition~\ref{prop:A3-W4}, $(\nabla\omega)_4=0$ for every invariant almost Hermitian structure. Therefore, $(z_j,\sigma_j)_{j=1}^{3}$ belongs to $\mathcal{W}_{\{2,3\}}$ if and only if $(\nabla\omega)_1=0.$ By \eqref{eq:A3-W1-condition}, this is equivalent to \eqref{eq:A3-W23-condition}. Consider the smooth map $F:\mathcal{Z}\longrightarrow\mathbb{C}$ defined by
    \[
    F((z_j,\sigma_j)_{j=1}^3):=Az_1z_2+Bz_1z_3+Cz_2z_3+D.
    \]
    Its complex partial derivatives with respect to $z_1,z_2,z_3$ are
    \[
    \frac{\partial F}{\partial z_1}=Az_2+Bz_3,\ \frac{\partial F}{\partial z_2}=Az_1+Cz_3,\ \frac{\partial F}{\partial z_3}=Bz_1+Cz_2.
    \]
    An argument analogous to the one used in the proof of Proposition~\ref{prop:A3-W12} shows that these three complex partial derivatives cannot be zero simultaneously. Therefore, the real differential of $F$ is surjective and $0$ is a regular value of $F.$ Since $\mathcal{Z}$ has real dimension $9,$  $F^{-1}(0)$ is an embedded submanifold of real dimension 7. For each $\epsilon=(\epsilon_j)_{j=1}^{3}\in\{1,-1\}^3,$ let
    \[
    \mathcal{S}_{\epsilon}:=\left\{(z_j,\sigma_j)_{j=1}^{3}\in F^{-1}(0):\frac{\sigma_j}{|\sigma_j|}=\epsilon_j,\ j=1,2,3\right\}.
    \]
    Then 
    \[
    F^{-1}(0)=\bigsqcup_{\epsilon\in\{1,-1\}^3}\mathcal{S}_\epsilon
    \]
    and each $\mathcal{S}_\epsilon$ is both closed and open in $F^{-1}(0).$ Moreover, each $\mathcal{S}_\epsilon$ is nonempty because 
    \[
    (i\epsilon_1,i\epsilon_23^{\epsilon_1\epsilon_3},i\epsilon_33^{-\epsilon_1\epsilon_2},10\epsilon_1,\epsilon_2,\epsilon_3)\in\mathcal{S}_\epsilon. 
    \]
    Finally, a homothety by a factor $\lambda>0$ acts on the parameters by
    \[
    \lambda\cdot(z_j,\sigma_j)_{j=1}^3=(z_j,\lambda\sigma_j)_{j=1}^3.
    \]
    Since $F((z_j,\lambda\sigma_j)_{j=1}^3)=\lambda F((z_j,\sigma_j)_{j=1}^3),$ this action preserves each $\mathcal{S}_\epsilon.$ Every homothety orbit intersects transversely the level set $|\sigma_1|+|\sigma_2|+|\sigma_3|=1$ at exactly one point. Hence, this level set is a smooth submanifold of real codimension $1$ in each $\mathcal{S}_\epsilon$ and is diffeomorphic to its quotient by homotheties. Therefore, each quotient has real dimension $6$ and the set $F^{-1}(0)$, up to homothety, is a disjoint union of eight smooth manifolds, each of real dimension $6.$
\end{proof}
\begin{remark}
The set of invariant almost Hermitian structures on $\mathbb{F}$ that belong to the Gray-Hervella class $\mathcal{W}_{\{2,3\}}$ admits a global parametrization in terms of $(z_1,z_2,(\sigma_j)_{j=1}^3).$ To obtain this parametrization, we shall show that $\partial F/\partial z_3\neq0$ for every $(z_j,\sigma_j)_{j=1}^3\in F^{-1}(0).$ Assume that $Bz_1+Cz_2=0$ and that $F((z_j,\sigma_j)_{j=1}^3)=0.$ Since $B-C=2\sigma_3\neq 0$ and $z_1,z_2\neq 0,$ we have $BC\neq 0$ and $z_2=-(B/C)z_1.$ Now, $F((z_j,\sigma_j)_{j=1}^3)=0$ implies $-Az_1^2(B/C)+D=0.$ Since $A-D=2\sigma_3\neq0$ and $z_1\neq 0,$ we have $AD\neq0$ and $z_1^2=CD/AB.$ The right-hand side is real. Since $\operatorname{Im}(z_1)\neq0,$ it follows that $\operatorname{Re}(z_1)=0.$ Consequently, $\operatorname{Re}(z_2)=0$ as well. Write $z_1=iy_1,$ $z_2=iy_2,$ where $\sigma_1y_1>0$ and $\sigma_2y_2>0.$ Taking imaginary parts in $Bz_1+Cz_2=0$ we obtain $By_1+Cy_2=0.$ Therefore, $BCy_1y_2<0.$ On the other hand, since $z_1=iy_1,$ we have $z_1^2=-y_1^2<0.$ Together with $z_1^2=CD/AB,$ this implies that $ABCD<0.$ By the proof of Proposition~\ref{prop:A3-W12}, the numbers $|\sigma_1|,|\sigma_2|,|\sigma_3|$ must satisfy the strict triangle inequalities. We claim that $BC\sigma_1\sigma_2>0.$ In fact, observe that $BC=(\sigma_1+\sigma_2)^2-\sigma_3^2.$ If $\sigma_1\sigma_2>0,$ then $|\sigma_1+\sigma_2|=|\sigma_1|+|\sigma_2|>|\sigma_3|$ and hence $BC>0.$ If $\sigma_1\sigma_2<0,$ then $|\sigma_1+\sigma_2|=\bigl||\sigma_1|-|\sigma_2|\bigr|<|\sigma_3|$ and hence $BC<0.$ Thus, $BC\sigma_1\sigma_2>0$ in both cases. Since $\sigma_1y_1>0$ and $\sigma_2y_2>0,$ the numbers $y_1y_2$ and $\sigma_1\sigma_2$ have the same sign. Hence, $BC\sigma_1\sigma_2>0$ implies $BCy_1y_2>0,$ contradicting
$BCy_1y_2<0.$
    Therefore,
    \[
        \frac{\partial F}{\partial z_3}=Bz_1+Cz_2\neq0
    \]
    at every point of $F^{-1}(0).$ Consequently, every element of $F^{-1}(0)$ is uniquely determined by $(z_1,z_2,(\sigma_j)_{j=1}^3)$ through
    \begin{equation*}
        z_3=-\frac{Az_1z_2+D}{Bz_1+Cz_2}.
    \end{equation*}
    Let
    \[
    \begin{aligned}
    U:=\biggl\{(z_1,z_2,(\sigma_j)_{j=1}^3)\in\mathbb{C}^2\times(\mathbb{R}^*)^3:&\,Bz_1+Cz_2\neq0,\ \sigma_j\operatorname{Im}(z_j)>0,\ j=1,2,\\
    &\sigma_3\operatorname{Im}\left(-\frac{Az_1z_2+D}{Bz_1+Cz_2}\right)>0\biggr\}.
    \end{aligned}
    \]
    Then the map
    \[
    U\ni (z_1,z_2,(\sigma_j)_{j=1}^3)\longmapsto\left(z_1,z_2,-\frac{Az_1z_2+D}{Bz_1+Cz_2},(\sigma_j)_{j=1}^3\right)
    \]
is a global parametrization of the set of invariant almost Hermitian structures on $\mathbb{F}$ that belong to $\mathcal{W}_{\{2,3\}}.$
\end{remark}

\begin{example}
    Consider the invariant almost Hermitian structure $(g,J)$, where 
    \[
J_{j}=\begin{pmatrix}
0 & 1\\
-1 & 0
\end{pmatrix},
\ 
A_{j}=\begin{pmatrix}
1 & 0\\
0 & 1
\end{pmatrix}, j=1,2,3.
\]
In this case, $(g,J)$ is in the class $\mathcal{W}_{\{1,3\}}$.
\end{example}

\begin{example}
    Consider the invariant almost Hermitian structure $(g,J)$, where 
    \[
J_{1}=\begin{pmatrix}
0 & 1\\
-1 & 0
\end{pmatrix},
\ 
J_{2}=\begin{pmatrix}
0 & 3\\
-1/3 & 0
\end{pmatrix},
\
J_{3}=\begin{pmatrix}
0 & 1/3\\
-3 & 0
\end{pmatrix}
\]
\[
A_{1}=\begin{pmatrix}
10 & 0\\
0 & 10
\end{pmatrix},
\ 
A_{2}=\begin{pmatrix}
1/3 & 0\\
0 & 3
\end{pmatrix},
\
A_{3}=\begin{pmatrix}
3 & 0\\
0 & 1/3
\end{pmatrix}
\]
In this case, $(g,J)$ is in the class $\mathcal{W}_{\{2,3\}}$.
\end{example}

\begin{example}
    Consider the invariant almost Hermitian structure $(g,J)$, where 
    \[
J_{j}=\begin{pmatrix}
1 & 2\\
-1 & -1
\end{pmatrix},
\ 
A_{j}=\begin{pmatrix}
1 & 1\\
1 & 2
\end{pmatrix}, j=1,2,3.
\]
In this case, $(g,J)$ is in the class $\mathcal{W}_{\{1,2,3\}}$.
\end{example}

\bibliographystyle{apa}

\begin{thebibliography}{99}
\bibitem{AGS} Abbena, E., Garbiero, S. and Salamon, S. Almost Hermitian geometry on six dimensional nilmanifolds. {\it Ann. Scuola Norm. Sup. Pisa Cl. Sci. (4)} \textbf{30}(1) (2001): 147--170.

\bibitem{AKW} Alekseevsky, D. V., Kruglikov, B. S. and Winther, H. Homogeneous almost complex structures in dimension 6 with semi-simple isotropy. {\it Ann. Global Anal. Geom.} \textbf{46}(4) (2014): 361--387.

\bibitem{AS} Alves, L. A. and da Silva, N. P. Invariant $\mathcal{G}_1$ structures on flag manifolds. {\it Geom. Dedicata} \textbf{213} (2021): 227--243.

\bibitem{AT} Andrada, A. and Tolcachier, A. Harmonic almost complex structures on almost abelian Lie groups and solvmanifolds. {\it Ann. Mat. Pura Appl.} \textbf{203}(3) (2024): 1037--1060.

\bibitem{B} Butruille, J.-B. Classification des variétés approximativement kähleriennes homogènes. {\it Ann. Global Anal. Geom.} \textbf{27} (2005): 201--225.

\bibitem{CS} Cabrera, F. M. and Swann, A. The intrinsic torsion of almost quaternion-Hermitian manifolds. \textit{Ann. Inst. Fourier (Grenoble)} {\bf 58}(5) (2008): 1455--1497.

\bibitem{FP} Fino, A. and Paradiso, F. Balanced Hermitian structures on almost abelian Lie algebras. {\it J. Pure Appl. Algebra} \textbf{227}(2) (2023): 107186.

\bibitem{FBSM} Freitas, A. P. C., del Barco, V. and San Martin, L. A. B. Invariant almost complex structures on real flag manifolds. {\it Ann. Mat. Pura Appl.} {\bf 197} (2018): 1821--1844.

\bibitem{G} Gauduchon, P. Hermitian connections and Dirac operators. {\it Boll. Un. Mat. Ital. B} {\bf 11}(2) (1997): 257--288.

\bibitem{GO} Grama, L. and Oliveira, A. R. Scalar curvatures of invariant almost Hermitian structures on generalized flag manifolds. {\it SIGMA Symmetry Integrability Geom. Methods Appl.} {\bf 17} (2021): 109.

\bibitem{GGN} Grajales, B., Grama, L. and Negreiros, C. J. C. Geodesic orbit spaces in real flag manifolds. {\it Comm. Anal. Geom.} {\bf 28}(8) (2020): 1933--2003.

\bibitem{GH} Gray, A. and Hervella, L. M. The sixteen classes of almost Hermitian manifolds and their linear invariants. \textit{Ann. Mat. Pura Appl.} {\bf 123} (1980): 35--58.

%\bibitem{Michelsohn} Michelsohn, M.-L. On the existence of special metrics in complex geometry. {\it Acta Math.} {\bf 149}(3--4) (1982): 261--295.

\bibitem{NN} Newlander, A. and Nirenberg, L. Complex analytic coordinates in almost complex manifolds. {\it Ann. of Math.} \textbf{65}(3) (1957): 391--404.

\bibitem{N} Nomizu, K. Invariant affine connections on homogeneous spaces. {\it Amer. J. Math.} {\bf 76}(1) (1954): 33--65.

\bibitem{SMN} San Martin, L. A. B. and Negreiros, C. J. C. Invariant almost Hermitian structures on flag manifolds. {\it Adv. Math.} \textbf{178}(2) (2003): 277--310.

\bibitem{SMS} San Martin, L. A. B. and Silva, R. C. J. Invariant nearly-K\"ahler structures. {\it Geom. Dedicata} \textbf{121} (2006): 143--154.

%\bibitem{PSM} Patr{\~a}o, M. and San Martin, L. A. B. The isotropy representation of a real flag manifold: Split real forms. {\it Indag. Math.} {\bf 26}(3) (2015): 547--579.

%\bibitem{Vaisman} Vaisman, I. On locally conformal almost Kähler manifolds. {\it Israel J. Math.} {\bf 24}(3--4) (1976): 338--351.
\end{thebibliography}

\end{document}